\renewcommand{\leq}{\leqslant}
\renewcommand{\geq}{\geqslant}
\numberwithin{equation}{section}
\def\stacksum#1#2{{\stackrel{{\scriptstyle #1}}
{{\scriptstyle #2}}}}
\newcommand{\Cc}{\mathbf{C}}
\newcommand{\Zz}{\mathbf{Z}}
\newcommand{\Rr}{\mathbf{R}}
\newcommand{\Qq}{\mathbf{Q}}
\newcommand{\proba}{\text{\boldmath$P$}}
\newcommand{\expect}{\text{\boldmath$E$}}
\newcommand{\charfun}{\mathds{1}}
\newcommand{\mods}[1]{\,(\mathrm{mod}\,{#1})}
\newcommand{\ra}{\rightarrow}
\newcommand{\sing}[1]{\mathfrak{S}(\uple{{#1}})}
\newcommand{\singn}[1]{\mathfrak{S}({{#1}})}
\newcommand{\uple}[1]{\text{\boldmath${#1}$}}
\newcommand{\compos}[2]{\uple{{#1}}\odot \uple{{#2}}}
\newcommand{\composn}[2]{{{#1}}\odot {{#2}}}
\newcommand{\stirling}[2]{\genfrac{\{}{\}}{0pt}{}{{#1}}{{#2}}}
\DeclareMathOperator{\res}{Res}
\DeclareMathOperator{\Reel}{Re}
\DeclareMathOperator{\peg}{peg}
\newcommand{\eps}{\varepsilon}
\DeclareMathSymbol{\gena}{\mathord}{letters}{"3C}
\DeclareMathSymbol{\genb}{\mathord}{letters}{"3E}
\def\sumb{\mathop{\sum\Bigl.^{\flat}}\limits}
\def\multsum{\mathop{\sum\cdots\sum}\limits}
\def\dblsum{\mathop{\sum \sum}\limits}
\def\sums{\mathop{\sum \Bigl.^{*}}\limits}
\theoremstyle{plain}
\newtheorem{theorem}{Theorem}[section]
\newtheorem{lemma}[theorem]{Lemma}
\newtheorem{corollary}[theorem]{Corollary}
\newtheorem{proposition}[theorem]{Proposition}
\theoremstyle{remark}
\newtheorem{remark}[theorem]{Remark}
\theoremstyle{definition}
\newtheorem{example}[theorem]{Example}
\begin{document}

\title[Averages of Euler products]{Averages of Euler products,
  distribution of singular series and the ubiquity of Poisson
  distribution}
 
\author{Emmanuel  Kowalski}
\address{ETH Z\"urich -- D-MATH\\
  R\"amistrasse 101\\
  8092 Z\"urich\\
  Switzerland} 
\email{kowalski@math.ethz.ch}

\date{}

\subjclass[2010]{11P32, 11N37, 11K65} \keywords{Singular series, prime
  $k$-tuples conjecture, Bateman-Horn conjecture, limiting
  distribution, Euler product, moments, Poisson distribution}

\begin{abstract}
  We discuss in some detail the general problem of computing averages
  of convergent Euler products, and apply this to examples arising
  from singular series for the $k$-tuple conjecture and more general
  problems of polynomial representation of primes.  We show that the
  ``singular series'' for the $k$-tuple conjecture have a limiting
  distribution when taken over $k$-tuples with (distinct) entries of
  growing size.
  We also give conditional arguments that would imply that the number
  of twin primes (or more general polynomial prime patterns) in
  suitable short intervals are asymptotically Poisson distributed.
\end{abstract}

\maketitle

\section{Introduction}

Euler products over primes are ubiquitous in analytic number theory,
going back to Euler's proof that there are infinitely many prime
numbers based on the behavior of the zeta function $\zeta(s)$ as $s\ra
1$. As defining $L$-functions of various types, Euler products are
particularly important, and their properties remain very mysterious.
In this paper, we consider the issue of the average or statistical
behavior of another important class of Euler products, the so-called
\emph{singular series}, arising in counting problems for certain
``patterns'' of primes (singular series also occur in many problems of
additive number theory or diophantine geometry, but we do not consider
these here).
\par
The first type of prime patterns are the prime $k$-tuples, which are
the subject of a famous conjecture of Hardy and Littlewood. Let $k\geq
1$ be an integer and let $\uple{h}=(h_1,\ldots,h_k)$ be a $k$-tuple of
integers with $h_i\geq 1$ for all $i$. Let then
$$
\pi(N;\uple{h})=|\{
n\leq N\,\mid\, n+h_i\text{ is prime for $1\leq i\leq k$}
\}|
$$
be the counting function for primes represented by this $k$-tuple;
note that, for instance, $\uple{h}=(1,3)$ leads to the function
counting twin primes up to $N$.
\par
For any prime number $p$, let $\nu_p(\uple{h})$ denote the cardinality
of the set
$$
\{h_1,\ldots, h_k\}\mods {p}
$$
of the reductions of the $h_i$ modulo $p$. Note that $1\leq
\nu_p(\uple{h})\leq \min(k,p)$ for all $p$, and that if we assume (as
we now do) that the $h_i$'s are distinct, then $\nu_p(\uple{h})=k$ for
all sufficiently large $p$.
\par
The \emph{singular series} associated with $\uple{h}$ is defined as
the Euler product
\begin{equation}\label{eq-singular}
\sing{h}=\prod_{p}{
\Bigl(1-\frac{\nu_p(\uple{h})}{p}\Bigr)
\Bigl(1-\frac{1}{p}\Bigr)^{-k}}
=
\prod_{p}{\Bigl(1-\frac{\nu_p(\uple{h})-1}{p-1}\Bigr)
\Bigl(1-\frac{1}{p}\Bigr)^{1-k}}
\end{equation}
which is absolutely convergent (as will be checked again later; here
and throughout the paper, as usual, $p$ is restricted to prime
numbers).
\par
The significance of this value is found in the Hardy-Littlewood prime
$k$-tuple conjecture (originally stated in~\cite{hl}), which states
that we should have
\begin{equation}\label{eq-ktuple}
  \pi(N;\uple{h})= \sing{h}\frac{N}{(\log N)^k}(1+o(1)),
\quad\quad \text{ as } N\ra +\infty,
\end{equation}
and in particular, if $\sing{h}\not=0$, there should be infinitely
many integers $n$ such that $n+h_1$, \ldots, $n+h_k$ are
simultaneously prime. Of course, if $k\geq 2$, this is still
completely open, but let us mention that from sieve methods, it
follows that
$$
\pi(N;\uple{h})
\leq 2^kk!(1+o(1))\sing{h}\frac{N}{(\log N)^k}
$$
as $N\ra +\infty$ (see, e.g.,~\cite[Th. 6.7]{ik} or~\cite[Ch. 4,
Th. 5.3]{halberstam-richert}), showing that the singular series does
arise naturally. Also some other previously inaccessible additive
problems with primes, related to counting arithmetic progressions (of
fixed length) of primes are currently being attacked with striking
success by B. Green and T. Tao (see~\cite{green-tao}).
\par
More generally, one considers polynomial prime patterns. First, a
finite family $\uple{f}=(f_1,\ldots, f_m)$ of polynomials in $\Zz[X]$
of degrees $\deg(f_j)\geq 1$ is said to be \emph{primitive} if the
$f_j$ are distinct, and each $f_j$ is irreducible, has positive
leading coefficient, and the gcd of its coefficients is $1$.
\par
If $\uple{f}$ is primitive, we say that an integer $n\geq 1$ is an
$\uple{f}$-prime seed if $f_1(n)$, \ldots, $f_m(n)$ are all (positive)
primes. Then we denote by
$$
\pi(N;\uple{f})=|\{n\leq N\,\mid\, n\text{ is an $\uple{f}$-prime
  seed}\}|
$$
for $N\geq 1$ the counting function for those prime seeds. Moreover,
let
$$
\peg(\uple{f})=\prod_{j=1}^m{\deg(f_j)}.
$$
\par
A generalization of the $k$-tuple conjecture, due to Bateman and
Horn~\cite{bateman-horn},\footnote{\ The qualitative version of which
  is due to Schinzel~\cite{schinzel}.}  states that
\begin{equation}\label{eq-bh}
\pi(N;\uple{f})
\sim \frac{1}{\peg(\uple{f})}\sing{f}\frac{N}{(\log N)^{m}},\quad
\text{ as } N\ra +\infty,
\end{equation}
if $\sing{f}\not=0$, where\footnote{\ Here, except in the special case
  where all $f_j$ are linear, the singular series $\sing{f}$ is
  \emph{not} absolutely convergent (see below for more details on
  this; the problem is that $\nu_p(\uple{f})$ is only equal to $m$ on
  average over $p$, and not for all $p$ large enough, except if each
  $f_j$ is linear); the product is thus \emph{defined} as the limit of
partial products over primes $p\leq y$.}
\begin{equation}\label{eq-singf}
\sing{f}=\prod_p{
\Bigl(
1-\frac{\nu_p(\uple{f})}{p}
\Bigr)
\Bigl(1-\frac{1}{p}\Bigr)^{-m}},
\end{equation}
with $\nu_p(\uple{f})$ being now the number of $x\in \Zz/p\Zz$ such that
$f_j(x)=0$ for some $j$, $1\leq j\leq m$.
\par
The Hardy-Littlewood conjecture for a $k$-tuple $\uple{h}$ is 
equivalent with this conjecture for the primitive family
$$
\uple{f}=(X+h_1,\ldots,X+h_k)
$$
for which $\nu_p(\uple{h})$ as defined previously does coincide with
$\nu_p(\uple{f})$.
\par
\medskip
\par
Our goal is to study various averages of singular series, for which
there is undoubted arithmetic interest. A result of
Gallagher~\cite{gallagher} states that
\begin{equation}\label{eq-gallagher}
\lim_{h\ra +\infty}{
\frac{1}{h^k}\sums_{|\uple{h}|\leq h}{
\sing{h}
}
}=1,
\end{equation}
for any fixed $k$, as $h\ra +\infty$, where $|\uple{h}|=\max{h_i}$ and
$\sums$ restricts to $k$-tuples with distinct components.  This
property was used by Gallagher himself to understand the behavior of
primes in short intervals (see also the recent work by Montgomery and
Soundararajan~\cite{mont-sound}), and it is also important the
remarkable results of Goldston, Pintz and Y\i ld\i r\i m concerning
small gaps between primes (see~\cite{gpy} or the survey~\cite{b959}).
\par
Our first question is to ask about finer aspects of the distribution
of $\sing{h}$. To apply the method of moments, we first prove the
following:

\begin{theorem}\label{pr-gallagher-moments}
  Let $k\geq 1$ be fixed. For any complex number $m\in\Cc$ with
  $\Reel(m)\geq 0$, there exists a complex number $\mu_k(m)$ such that
$$
\lim_{h\ra +\infty}{
\frac{1}{h^k}\sums_{|\uple{h}|\leq h}{
\sing{h}^m
}
}=\mu_k(m).
$$
\par
Moreover, for $m$, $k\geq 1$ both integers, we have the symmetry
property
\begin{equation}\label{eq-symmetric}
\mu_k(m)=\mu_m(k)\ ;
\end{equation}
in addition, we have $\mu_1(m)=1$ for all integers $m\geq 1$, and
hence $\mu_k(1)=1$ for all $k\geq 1$.
\end{theorem}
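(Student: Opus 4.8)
The plan is to express the $m$-th power of the singular series, when $m$ is a positive integer, as a new singular-series-type Euler product and then apply the general averaging machinery. Concretely, for a $k$-tuple $\uple{h}=(h_1,\ldots,h_k)$ one has
\[
\sing{h}^m=\prod_p{\Bigl(1-\frac{\nu_p(\uple{h})}{p}\Bigr)^m\Bigl(1-\frac{1}{p}\Bigr)^{-km}}.
\]
The first step is to recognise the local factor as a polynomial in $1/p$ whose ``main term'' is governed by an arithmetic count. I would introduce, for an $m$-tuple of ``colours'' and the $k$ shifts, the quantity $\nu_p(\uple{h};m)$ counting the number of points $x\in(\Zz/p\Zz)^m$ such that some coordinate of $x$ lies in $\{h_1,\ldots,h_k\}\mods p$; equivalently $(\Zz/p\Zz)^m\setminus(\Zz/p\Zz\setminus\{h_i\})^m$. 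A short combinatorial check (inclusion–exclusion on the coordinates) shows
\[
1-\frac{\nu_p(\uple{h};m)}{p^m}=\Bigl(1-\frac{\nu_p(\uple{h})}{p}\Bigr)^m,
\]
so that $\sing{h}^m$ is exactly the singular series attached to the linear family obtained by taking the $m$-fold ``Cartesian'' combination of $(X+h_1,\dots,X+h_k)$ — i.e. the family of $km$ linear forms $X_j+h_i$ in $m$ variables, of the shape handled by the general theory of Section (the Euler product is absolutely convergent because $\nu_p(\uple{h};m)=km+O(1/p)$ for large $p$).

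The second step is then to invoke the general theorem on averages of convergent Euler products (stated earlier) for this family: averaging $\sing{h}^m$ over distinct $\uple{h}$ with $|\uple{h}|\le h$, normalised by $h^k$, converges as $h\to\infty$ to a limit which I call $\mu_k(m)$, given as an absolutely convergent product over $p$ of explicit local densities. This disposes of existence of the limit for positive integers $m$; the extension to all complex $m$ with $\Reel(m)\ge 0$ should follow either by the same general theorem applied with complex exponent (the local factor $(1-\nu_p/p)^m(1-1/p)^{-km}=1+O(1/p^2)$ uniformly for $\Reel(m)$ in a compact set, which is exactly the decay condition the averaging theorem requires), or, if the general theorem is only stated for integer $m$, by observing that $\mu_k(m)$ as produced is given by a product over $p$ of expressions holomorphic in $m$ on $\Reel(m)>-\delta$ and locally uniformly convergent there, hence holomorphic, and one reads off the value along $\Reel(m)\ge 0$.

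The third step is the symmetry $\mu_k(m)=\mu_m(k)$ for integers $k,m\ge1$. The point is that the explicit local density at $p$ produced in step two depends on the pair $(k,m)$ only through the generating/combinatorial data of the $km$ linear forms $X_j+h_i$, and this data is manifestly symmetric under swapping the rôles of the $k$ ``shift'' indices and the $m$ ``variable'' indices: swapping $i\leftrightarrow j$ turns the count $\nu_p(\uple{h};m)$ attached to shifts $h_1,\dots,h_k$ in $m$ variables into the corresponding count for $m$ ``shifts'' in $k$ variables, and I expect the limiting local factor to be literally the same function of the two integers. One has to be a little careful that the normalisations match — we divide by $h^k$ on one side and by $h^m$ on the other — but the averaging theorem computes both limits as products of local densities that I claim coincide term by term, so the global products coincide. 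The main obstacle is precisely this bookkeeping: making the local density completely explicit as a symmetric function of $(k,m)$, and checking that the inclusion–exclusion identities line up so that no asymmetric error terms survive in the limit.

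Finally, for $\mu_1(m)=1$: when $k=1$ the tuple $\uple{h}=(h_1)$ has $\nu_p(\uple{h})=1$ for every $p$, so $\sing{h}=\prod_p(1-1/p)(1-1/p)^{-1}=1$ identically, whence $\sing{h}^m=1$ and the average is trivially $1$. By the symmetry just proved, $\mu_k(1)=\mu_1(k)=1$ for all $k\ge1$. This also serves as a useful consistency check on the explicit formula from steps two and three.
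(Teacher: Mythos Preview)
Your outline is essentially the same strategy as the paper's, repackaged. The paper sets up probability spaces $\Omega_1$ (distinct $k$-tuples up to $h$) and $\Omega_2=\prod_p(\Zz/p\Zz)^k$, writes $\sing{h}^m=\prod_p(1+a_m(p,\nu_p(\uple{h})))$ with $1+a_m(p,\nu)=(1-\nu/p)^m(1-1/p)^{-km}$, and applies Proposition~\ref{pr-euler} directly; the limit is the product over $p$ of $\expect_2\bigl[(1-\rho_p/p)^m(1-1/p)^{-km}\bigr]$. Your combinatorial device $\nu_p(\uple{h};m)$ is exactly the inner step of the paper's symmetry proof: averaging $1-\nu_p(\uple{h};m)/p^m$ over $\uple{h}\in(\Zz/p\Zz)^k$ gives the probability that a random $k$-tuple and a random $m$-tuple in $\Zz/p\Zz$ are disjoint, which is visibly symmetric in $(k,m)$, and this is precisely how the paper establishes~\eqref{eq-symmetric}. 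So on that point you and the paper agree completely.

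Two places where your sketch needs tightening. First, the convergence claim ``$\nu_p(\uple{h};m)=km+O(1/p)$'' is garbled (for large $p$ one has $\nu_p(\uple{h};m)=p^m-(p-k)^m\sim kmp^{m-1}$), and the assertion that the $p$-factor is $1+O(1/p^2)$ \emph{uniformly} is only true for primes with $\nu_p(\uple{h})=k$; for the finitely many $p\mid\Delta(\uple{h})$ it is merely $1+O(1/p)$, and the paper's Lemma~\ref{lm-apnu} is there precisely to control both regimes and feed the resulting tail bound into Proposition~\ref{pr-euler}. Second, and more substantively, your fallback route for complex $m$ via analytic continuation does not work as stated: knowing that the \emph{candidate} limit $\prod_p(1+\expect_2(Y_p(m)))$ is holomorphic in $m$ does not, by itself, show that the finite averages $h^{-k}\sums\sing{h}^m$ converge to it for non-integer $m$. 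The paper avoids this by defining $a_m(p,\nu)$ for arbitrary $m$ with $\Reel(m)\geq 0$ from the outset and running the quantitative comparison of Proposition~\ref{pr-euler} uniformly in $m$; your ``route one'' is exactly this, so you should commit to it and drop route two.
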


The last statement ($\mu_k(1)=1$) is of course Gallagher's
theorem~(\ref{eq-gallagher}); our proof is not intrinsically
different, but maybe more enlightening.  These results are in fact
quite straightforward, and only the final symmetry in $k$ and $m$ is
maybe surprising. However, its origin is not particularly mysterious:
it is a ``local'' phenomenon, and it can be guessed
from~(\ref{eq-ktuple}) by a formal computation.
\par
We will also find estimates for the size of the moments which are good
enough to imply the existence of a limiting distribution of $\sing{h}$
for $k$-tuples ($k$ fixed):

\begin{theorem}\label{pr-limiting}
  Let $k\geq 1$ be fixed. There exists a probability law $\nu_k$ on
  $\Rr^+=[0,+\infty[$ such that $\sing{h}$, for $\uple{h}$ with
  $|\uple{h}|\leq h$ and $h\ra+\infty$, becomes equidistributed with
  respect to $\nu_k$, or equivalently
$$
\lim_{h\ra +\infty}{\frac{1}{h^k}
\sums_{|\uple{h}|\leq h}{f(\sing{h})}
}= \int_{\Rr^+}{f(t)d\nu_k(t)}
$$
for any bounded continuous function on $\Rr$.
\end{theorem}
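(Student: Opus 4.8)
The plan is to prove Theorem~\ref{pr-limiting} by the method of moments, using Theorem~\ref{pr-gallagher-moments} to produce a candidate moment sequence and a bound on its growth to show that this sequence determines a unique law. For each $h\geq1$, let $\nu_{k,h}$ be the probability measure on $\Rr^+$ putting mass $1/N_{k,h}$ at the point $\sing{h}$ for each admissible $\uple{h}$ (that is, $|\uple{h}|\leq h$ with distinct entries), where $N_{k,h}=h^k(1+O_k(1/h))$ is the number of such tuples; note $\sing{h}\geq0$ since $\nu_p(\uple{h})\leq p$ for all $p$. This $\nu_{k,h}$ is a compactly supported probability measure with
\begin{equation*}
\int_{\Rr^+}f(t)\,d\nu_{k,h}(t)=\frac{1+O_k(1/h)}{h^k}\sums_{|\uple{h}|\leq h}{f(\sing{h})}
\end{equation*}
for every bounded Borel function $f$. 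Applying Theorem~\ref{pr-gallagher-moments} with $m$ a nonnegative integer (with the conventions $\sing{h}^0=1$, $\mu_k(0)=1$) gives convergence of all moments,
\begin{equation*}
\int_{\Rr^+}t^m\,d\nu_{k,h}(t)\lra\mu_k(m)\qquad(h\ra+\infty).
\end{equation*}
Each left-hand side being a moment of a positive measure, letting $h\ra+\infty$ shows $\sum_{i,j}c_i\overline{c_j}\mu_k(i+j)\geq0$ and $\sum_{i,j}c_i\overline{c_j}\mu_k(i+j+1)\geq0$ for all finitely supported complex sequences $(c_i)$; hence $(\mu_k(m))_{m\geq0}$ is a Stieltjes moment sequence and admits at least one representing probability measure on $[0,+\infty[$.

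The key point, and the step I expect to be the main obstacle, is a bound on $\mu_k(m)$ as $m\ra+\infty$. I would obtain it from the Euler-product evaluation underlying Theorem~\ref{pr-gallagher-moments}: since $\sing{h}$ is absolutely convergent for $k$-tuples, that proof — or a rerun of it — yields the clean product formula $\mu_k(m)=\prod_p A_p(m)$, with $A_p(m)=p^{-k}\sum_{\uple{x}\in(\Zz/p\Zz)^k}L_p(\uple{x})^m$ and $L_p(\uple{x})=(1-\nu_p(\uple{x})/p)(1-1/p)^{-k}$, and one then estimates $\log A_p(m)$ in three ranges. For the finitely many $p\leq k$ one has $L_p\leq(1-1/p)^{1-k}=O_k(1)$, so these primes together contribute at most $B_k^m$ to $\mu_k(m)$ for a constant $B_k$. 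For $k<p\lesssim m$: the generic configuration ($\nu_p=k$, of density $1+O_k(1/p)$) has $L_p=1+O_k(p^{-2})$, while for $1\leq j\leq k-1$ the configurations with $\nu_p=k-j$ have density $O_k(p^{-j})$ and $L_p=1+O_k(p^{-1})$, so $\log A_p(m)=O_k(1+m/p)$, and Mertens' estimate bounds their total contribution by $\exp(O_k(m\log\log m))$. For $p\gtrsim m$ the same bookkeeping, now exploiting the cancellation between the generic term and the sparse ones, gives $\log A_p(m)=O_k(m/p^2)$ with convergent sum. Altogether one obtains
\begin{equation}\label{eq-moment-bound}
\mu_k(m)\leq\exp\bigl(C_k\,m\log\log(m+3)\bigr)\qquad(m\geq1)
\end{equation}
for a constant $C_k$; the delicate part is to make the error terms in the Euler-product evaluation uniform over all integer powers $m$, which is exactly what~\eqref{eq-moment-bound} records.

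Granting~\eqref{eq-moment-bound}, we have $\mu_k(2m)^{1/(2m)}\leq(\log(2m+3))^{C_k}$, so $\sum_{m\geq1}\mu_k(2m)^{-1/(2m)}=+\infty$: Carleman's condition holds, the Stieltjes moment problem for $(\mu_k(m))_m$ is determinate, and there is a \emph{unique} probability measure $\nu_k$ on $[0,+\infty[$ with $\int_{\Rr^+}t^m\,d\nu_k(t)=\mu_k(m)$ for all $m\geq0$. Since $\sup_h\int_{\Rr^+}t\,d\nu_{k,h}(t)<+\infty$ (indeed it tends to $\mu_k(1)=1$), the family $\{\nu_{k,h}\}_h$ is tight; combined with the convergence of all moments to those of the determinate measure $\nu_k$, the Fr\'echet--Shohat form of the method of moments yields $\nu_{k,h}\Rightarrow\nu_k$ weakly, and unwinding the harmless normalization $1/N_{k,h}$ versus $1/h^k$ gives exactly the statement of Theorem~\ref{pr-limiting}. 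I would also record the intrinsic description of $\nu_k$ as the law of the almost surely convergent random Euler product $\prod_p X_p$, where the $X_p$ are independent and $X_p$ is distributed as $L_p(\uple{x})$ for $\uple{x}$ uniform in $(\Zz/p\Zz)^k$; this provides an alternative proof via almost sure convergence of random Euler products, and makes transparent structural features of $\nu_k$ such as the atom at $0$ (present for all $k\geq2$, coming from the prime $2$).
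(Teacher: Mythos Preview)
Your proposal is correct and follows essentially the same route as the paper: your moment bound~\eqref{eq-moment-bound} is the upper half of Proposition~\ref{pr-growth} (with the same local analysis of the factors $A_p(m)$, the paper splitting the primes at $p=km$ rather than at $p\asymp k$ and $p\asymp m$), and then the method of moments with a determinacy criterion---Carleman for you, nonzero radius of convergence of $\sum \mu_k(m)t^m/m!$ in the paper---gives the weak convergence. The paper identifies $\nu_k$ directly as the law of the random Euler product $Z_Y$ on $\Omega_2$ (Corollary~\ref{cor-distrib}), which is exactly the intrinsic description you record at the end.
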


The second question we explore is the generalization to other prime
patterns of the result of Gallagher (based on~(\ref{eq-gallagher}))
that shows that a uniform version of the prime $k$-tuple conjecture
implies that for a fixed $\lambda>0$, the distribution of
$\pi(x+\lambda \log x)-\pi(x)$ is close to a Poisson distribution of
parameter $\lambda$ as $x\ra +\infty$, i.e., it implies that
\begin{equation}\label{eq-gallagher-poisson}
\frac{1}{N}|\{n\leq N\,\mid\, \pi(n+h)-\pi(n)=m\}|
\ra e^{-\lambda}\frac{\lambda^m}{m!},\quad\quad\text{as } 
N\ra +\infty,
\end{equation}
for any integer $m\geq 0$. It turns out that, indeed, under a general
uniform version of the Bateman-Horn conjecture, for \emph{any} fixed
primitive family $\uple{f}$, the number of $\uple{f}$-prime seeds in
short intervals of ``fair'' length (i.e., intervals around $n$ in
which~(\ref{eq-bh}) predicts that, on average, there should be a fixed
number of $\uple{f}$-prime seeds) \emph{always} follows a Poisson
distribution. As for the symmetry property of the higher moments for
the singular series related to $k$-tuple conjecture, this turns out to
depend primarily on local identities, but we found this rigidity of
patterns to be quite surprising at first sight. Precisely:

\begin{theorem}\label{pr-poisson}
  Assume that the Bateman-Horn conjecture holds uniformly for all
  primitive families with non-zero singular series, in the sense that
\begin{equation}\label{eq-bh-unif}
  \pi(N;\uple{f})
  =\frac{1}{\peg(\uple{f})}\sing{f}\frac{N}{(\log N)^{m}}
  \Bigl(1+O\Bigl(\frac{c(\uple{f})^{\eps}}{\log N}\Bigr)\Bigr)
\end{equation}
holds for all primitive families $\uple{f}$, all $\eps>0$, and all
$N\geq 2$, where
$$
c(\uple{f})=\sum_{1\leq j\leq m}{H(f_j)},\quad\quad
H(a_0+a_1X+\cdots +a_dX^d) =\max_i |a_i|,
$$
and the implied constant depends at most on the degrees of the
elements of $\uple{f}$ and on $\eps$.
\par
Let $\uple{f}$ be a fixed primitive family with $\sing{f}\not=0$. For
$N\geq 1$, let
$$
\delta(N,\uple{f})=\frac{\peg(\uple{f})}{\sing{\uple{f}}}(\log N)^m.
$$
\par
Then for any $\lambda>0$ and any integer $r\geq 0$, we have
$$
\lim_{N\ra +\infty}{
\frac{1}{N}
|\{
n\leq N\,\mid\,
\pi(n+\lambda\delta(N,\uple{f});\uple{f})-\pi(n;\uple{f})=r
\}|
}=e^{-\lambda}\frac{\lambda^r}{r!}.
$$
\par
In other words, for $N$ large, the number of $\uple{f}$-prime
seeds in an interval around $N\geq 1$ of length $\lambda(\log
N)^m$ is asymptotically distributed like a Poisson random variable
with mean given by $\sing{f}\peg(\uple{f})^{-1}\lambda$.
\end{theorem}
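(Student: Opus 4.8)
The plan is to prove the convergence in law by the method of moments, which is legitimate because the Poisson law of mean $\lambda$ is determined by its moments. Write
$$
X_N(n)=\pi\bigl(n+\lambda\delta(N,\uple{f});\uple{f}\bigr)-\pi(n;\uple{f}),\qquad
L=L_N=\lfloor\lambda\delta(N,\uple{f})\rfloor,
$$
so that $L\asymp_{\uple{f},\lambda}(\log N)^m$, and it suffices to show that for every fixed integer $j\geq1$ the $j$-th factorial moment
$$
M_j(N)=\frac1N\sum_{n\leq N}X_N(n)\bigl(X_N(n)-1\bigr)\cdots\bigl(X_N(n)-j+1\bigr)
$$
tends to $\lambda^j$, this being the $j$-th factorial moment of a Poisson variable of mean $\lambda$. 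Using $X_N(n)=\sum_{0<t\leq L}\charfun[\,n+t\text{ is an }\uple{f}\text{-prime seed}\,]$ and expanding the falling factorial as a sum over ordered $j$-tuples $\uple{t}=(t_1,\dots,t_j)$ of \emph{distinct} integers in $(0,L]$, one rewrites
$$
M_j(N)=\frac1N\sums_{\uple{t}}\pi(N;g_{\uple{t}})+o(1),\qquad
g_{\uple{t}}=\bigl(f_\ell(X+t_i)\bigr)_{1\leq\ell\leq m,\ 1\leq i\leq j},
$$
where $\sums$ is over distinct-entry tuples and the $o(1)$ swallows trivial edge effects (positivity of $f_\ell(n+t_i)$ for the finitely many small $n$).

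\textbf{Removing the diagonal.} Next I would isolate the \emph{non-degenerate} tuples, those for which the $mj$ polynomials $f_\ell(X+t_i)$ are pairwise distinct; then $g_{\uple{t}}$ is a primitive family with $|g_{\uple{t}}|=mj$ and $\peg(g_{\uple{t}})=\peg(\uple{f})^j$. A coincidence $f_\ell(X+t_i)=f_{\ell'}(X+t_{i'})$ with $(i,\ell)\neq(i',\ell')$ forces $\ell\neq\ell'$, forces $f_\ell$ and $f_{\ell'}$ to be translates of one another, and pins the difference $t_{i'}-t_i$ to a nonzero constant depending only on the pair $(\ell,\ell')$; hence the degenerate tuples lie in a union of $O_{\uple{f},j}(1)$ proper affine-linear subspaces of $\Zz^j$. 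Grouping the indices on such a subspace into maximal ``pinned'' clusters, a cluster of $v$ indices replaces $v$ free parameters by one (a factor $L^{v-1}\asymp(\log N)^{m(v-1)}$) while --- collisions occurring only inside a translate class, and a sumset of a $v$-element set with a nonempty set having at least $v$ elements --- the corresponding piece of $g_{\uple{t}}$ retains at least $m+(v-1)$ distinct polynomials. Feeding this into the Selberg-sieve upper bound $\pi(N;\uple{g})\ll_{\uple{f},j,\eps}(\log N)^{\eps}N/(\log N)^{|\uple{g}|}$ for primitive families (or~\eqref{eq-bh-unif} when the singular series is nonzero, together with the trivial $\pi=O(1)$ when it vanishes), the degenerate tuples contribute $\ll_{\uple{f},j,\eps}(\log N)^{-1+\eps}=o(1)$ to $M_j(N)$. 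This step is empty whenever no two of the $f_\ell$ are translates of each other --- in particular always when $m=1$, which is Gallagher's situation.

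\textbf{Main term.} For non-degenerate $\uple{t}$ I would apply the uniform Bateman--Horn hypothesis~\eqref{eq-bh-unif} to $g_{\uple{t}}$. Since $H\bigl(f_\ell(X+t)\bigr)\ll_{\uple{f}}(1+|t|)^{\deg f_\ell}$, we have $c(g_{\uple{t}})\ll_{\uple{f},j}L^{O_{\uple{f}}(1)}\ll(\log N)^{O_{\uple{f},j}(1)}$, so the relative error $c(g_{\uple{t}})^{\eps}/\log N$ is $\ll(\log N)^{O(\eps)-1}$ uniformly in $\uple{t}$; summed over the $\ll L^j$ tuples and divided by $N$ it contributes $o(1)$. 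This leaves
$$
M_j(N)=\frac{1+o(1)}{\peg(\uple{f})^j(\log N)^{mj}}\sums_{\uple{t}}\mathfrak{S}(g_{\uple{t}})+o(1),
$$
the sum now over all distinct-entry $\uple{t}$ in $(0,L]$ (reinserting the degenerate ones costs $\ll(\log N)^{-m+\eps}$, since $\mathfrak{S}(g_{\uple{t}})\ll(\log N)^{\eps}$). I would then quote the averaging results for Euler products developed above, applied to the family $\uple{t}\mapsto\mathfrak{S}(g_{\uple{t}})$ --- the polynomial-family analogue of Gallagher's estimate~\eqref{eq-gallagher} --- to conclude that
$$
\frac1{L^j}\sums_{\uple{t}}\mathfrak{S}(g_{\uple{t}})\longrightarrow\sing{f}^{\,j}\qquad(L\to+\infty),
$$
the limiting constant being forced by a purely local identity --- for $j=1$ one has $\nu_p(g_t)=|R_p(\uple{f})-t|=\nu_p(\uple{f})$ for every prime $p$ and every $t$, so $\mathfrak{S}(g_t)=\sing{f}$ identically --- which is the same identity underlying the normalisation in $\delta(N,\uple{f})$. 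Finally $L=\lambda\,\peg(\uple{f})\,\sing{f}^{-1}(\log N)^m(1+o(1))$ gives $L^j/(\log N)^{mj}\to\bigl(\lambda\,\peg(\uple{f})/\sing{f}\bigr)^j$, whence $M_j(N)\to\peg(\uple{f})^{-j}\bigl(\lambda\,\peg(\uple{f})/\sing{f}\bigr)^j\sing{f}^{\,j}=\lambda^j$, and the theorem follows.

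\textbf{Main obstacle.} I expect the real content to be twofold: the averaging identity for $\sum_{\uple{t}}\mathfrak{S}(g_{\uple{t}})$, which is precisely the (non-absolutely convergent) averages-of-Euler-products problem the paper is built around; and the uniformity in the error term of~\eqref{eq-bh-unif}, which is exactly why the hypothesis is phrased through the height parameter $c(\uple{f})$ and why $\delta(N,\uple{f})$ is taken only as large as a power of $\log N$. The combinatorics of the degenerate tuples, which does not occur in the classical linear case, is the one piece of genuinely new bookkeeping needed in passing from prime $k$-tuples to arbitrary primitive families; it works out because shifted copies of an irreducible polynomial can coincide only within a translate class and so can never collapse $g_{\uple{t}}$ below $m$ distinct members.
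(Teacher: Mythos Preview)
Your proposal is correct and follows the same architecture as the paper: reduce to moments, strip off the degenerate (non-primitive) tuples via a counting-versus-sieve trade-off, apply the uniform Bateman--Horn hypothesis to the surviving tuples, and finish with the averaging identity $\frac{1}{L^j}\sums_{\uple{t}}\mathfrak{S}(g_{\uple{t}})\to\sing{f}^{j}$ (which is exactly Proposition~\ref{pr-identity}).

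The one genuine organisational difference is that you compute \emph{factorial} moments directly, whereas the paper computes ordinary $k$-th moments and then decomposes by the number $r$ of distinct shifts, picking up the factors $\stirling{k}{r}/r!$ and recognising $\sum_{r}\stirling{k}{r}\lambda^r/r!$ as the $k$-th Poisson moment. Your route is a little cleaner (no Stirling numbers) but uses the same two inputs: Proposition~\ref{pr-identity} for the main term and the content of Lemma~\ref{lm-imprim} for the degenerate tuples. For the latter, your cluster argument is essentially the paper's graph argument; note that the bound ``at least $m+(v-1)$ distinct polynomials in a cluster of size $v$'' actually requires the additive inequality $|A+B|\geq |A|+|B|-1$ in $\Zz$ (summed over translate classes, giving $m+r(v-1)\geq m+(v-1)$), not merely $|A+B|\geq v$ as your phrasing suggests --- the paper instead proves the weaker $m+1$ via a permutation/cycle argument, which already suffices.
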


The final purpose of this paper is to emphasize the fact that
Theorems~\ref{pr-gallagher-moments} and~\ref{pr-poisson} are special
cases of the problem of computing the average of some families of
values of Euler products, and (because here the Euler products are
absolutely convergent or almost so) the outcome is consistent with the
heuristic that the $p$-factors are \emph{independent random
  variables}, so the average of the Euler product is the product of
``local'' averages.  All this is a fairly common theme in analytic
number theory, but our presentation is maybe more systematic than
usual. The works of Granville-Soundararajan~\cite{granville-sound} and
Cogdell-Michel~\cite{cogdell-michel} also present this point of view
very successfully for values of certain families of $L$-functions at
the edge of the critical strip, and Y.  Lamzouri~\cite{lamzouri} has
developed this type of ideas in a quite general context.  Although
this is not really relevant from the point of view of singular series,
we just mention that Euler products built of local averages still make
sense inside the critical strip for many families of $L$-functions,
and are closely related to their distribution (as one can see, e.g.,
from the work of Bohr and Jessen~\cite{bj} for the Riemann zeta
function). On the critical line, ``renormalized'' Euler products still
occur in the moment conjectures for $L$-functions (see,
e.g.,~\cite{keating-snaith}), although other factors (conjecturally
linked to Random Matrices) also appear.

\par
\medskip 
\par
In the next section, we state in probabilistic terms a general result
on averages of random Euler products. Then we use it to prove
Theorem~\ref{pr-gallagher-moments} and
Theorem~\ref{pr-limiting} in Sections~\ref{sec-moments-singular}
and~\ref{sec-bound}. In Section~\ref{sec-poisson}, we prove
Theorem~\ref{pr-poisson}. 
\par
\medskip 
\par
\textbf{Notation.} As usual, $|X|$ denotes the cardinality of a set.
By $f\ll g$ for $x\in X$, or $f=O(g)$ for $x\in X$, where $X$ is an
arbitrary set on which $f$ is defined, we mean synonymously that there
exists a constant $C\geq 0$ such that $|f(x)|\leq Cg(x)$ for all $x\in
X$. The ``implied constant'' is any admissible value of $C$. It may
depend on the set $X$ which is always specified or clear in
context. On the other hand, $f\sim g$ as $x\ra x_0$ means $f/g\ra 1$
as $x\ra x_0$. 
\par
We use standard probabilistic terminology: a probability space
$(\Omega,\Sigma,\proba)$ is a triple made of a set $\Omega$ with a
$\sigma$-algebra and a measure $\proba$ on $\Sigma$ with
$\proba(\Omega)=1$. A random variable is a measurable function
$\Omega\ra \Rr$ (or $\Omega\ra \Cc$), and the expectation $\expect(X)$
on $\Omega$ is the integral of $X$ with respect to $\proba$ when
defined. The law of $X$ is the measure $\nu$ on $\Rr$ (or $\Cc$)
defined by $\nu(A)=\proba(X\in A)$. If $A\subset \Omega$, then
$\charfun_{A}$ is the characteristic function of $A$.
\par
For $k$-tuples $\uple{h}=(h_1,\ldots, h_k)$, we recall that
$|\uple{h}|=\max(|h_i|)$. When different values of $k$ can occur, we
sometimes write $|\uple{h}|_k$ to indicate the number of components of
$\uple{h}$, in particular a sum such as
$$
\sum_{|\uple{h}|_k\leq h}{a(\uple{h})}
$$
is a sum over $k$-tuples (of positive integers) with components $\leq
h$. 

\section{A probabilistic statement}
\label{sec-proba}

We assume given a probability space $(\Omega,\Sigma,\proba)$, and two
sequences of random variables 
$$
X_p,Y_p\,:\, \Omega\ra \Cc
$$
which are indexed by prime numbers.  
\par
We assume that $(Y_p)$ is an independent sequence; recall that this
means that
$$
\proba(Y_{p_1}\in A_1, \ldots, Y_{p_k}\in A_k)=
\prod_{1\leq i\leq k}{\proba(Y_{p_i}\in A_i)}
$$
for all choices of finitely many distinct primes $p_1$, \ldots, $p_k$,
and all measurable sets $A_i\subset \Cc$, and that a consequence is
that (when the expectation makes sense), we have
$$
\expect(Y_{p_1}\cdots Y_{p_k})=\expect(Y_{p_1})\cdots
\expect(Y_{p_k}).
$$
\par
We now extend the family to all integers by denoting
$$
X_q=\prod_{p\mid q}{X_p},\quad\quad
Y_q=\prod_{p\mid q}{Y_p},
$$
for any squarefree integer $q\geq 1$, and $X_q=Y_q=0$ if $q\geq 1$ is
not squarefree.
\par
We will consider the behavior of the random Euler products
$$
Z_X=\prod_{p}{(1+X_p)},\quad\quad Z_Y=\prod_{p}{(1+Y_p)}
$$
and in particular their expectations $\expect(Z_X)$ and
$\expect(Z_Y)$.
\par
For this purpose, we assume that the products converge
absolutely (almost surely). More precisely, expand formally
$$
\prod_{p}{(1+X_p)}=\sumb_{q\geq 1}{X_q},
$$
where $\sumb$ restricts the sum to squarefree numbers.  Then we assume
that
\begin{equation}\label{eq-remainder1}
\sumb_{q>x}{|X_q|}\leq R_X(x)
\end{equation}
where $R_X(x)$ is an integrable non-negative random variable such that
$R_X(x)\ra 0$ almost surely as $x\ra +\infty$. It then follows that
$Z_X$ is almost surely an absolutely convergent infinite product.
\par
We moreover assume that the product
\begin{equation}\label{eq-assump}
\prod_{p}{(1+|\expect(Y_p)|)}
\end{equation}
converges (absolutely). By independence of the $(Y_p)$, we know that
$$
|\expect(Y_q)|=\Bigl|\expect\Bigl(\prod_{p\mid q}{Y_p}\Bigr)\Bigr|
=\prod_{p\mid q}{|\expect(Y_p)|}
$$
and so expanding again in series, we obtain that
\begin{equation}\label{eq-yq}
\sumb_{q\geq 1}{|\expect(Y_q)|}=
\sumb_{q\geq 1}{\prod_{p\mid q}{|\expect(Y_p)|}}=
\prod_{p}{(1+|\expect(Y_p)|)}<+\infty.
\end{equation}
\par
Our goal is to show that if $(X_p)$ is distributed ``more or less''
like $(Y_p)$, but without being independent, the expectation of $Z_X$
is close to
$$
\prod_p{(1+\expect(Y_p))}.
$$
In particular, we will typically have $(X_p)$ depend on another
parameter (say $h$), in such a way that $X_{p,h}$ converges in law to
$Y_p$ (which will remain fixed) when $h\ra +\infty$, and this will
lead to the relation
$$
\lim_{h\ra +\infty} \expect\Bigl(\prod_p{(1+X_{p,h})}\Bigr)=
\prod_p{(1+\expect(Y_p))}
$$
in a number of situations. We interpret this as saying that (when
applicable) the average of the Euler product $Z_X$ is obtained ``as
if'' the factors were independent, and taking the product of the local
averages $1+\expect(Y_p)$ of the ``model'' random variables defining
$Z_Y$.
\par
Here is the precise (and almost tautological) ``finitary'' statement
from which applications will be derived.

\begin{proposition}\label{pr-euler}
Let $(X_p)$, $(Y_p)$ be as above.
Then for any choice of the auxiliary parameter $x>0$, we have
$$
\expect(Z_X)= \prod_{p}{(1+\expect(Y_p))}+O\Bigl(\expect(R_X(x))
+\sumb_{q\leq x}{|\expect(X_q-Y_q)|}
+\sumb_{q>x}{|\expect(Y_q)}|\Bigr),
$$
where the implied constant is absolute, and in fact has modulus at
most $1$.
\end{proposition}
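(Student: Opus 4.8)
The plan is to treat the statement as a piece of bookkeeping: expand $Z_X$ as an absolutely convergent sum over squarefree integers, cut this sum at the threshold $x$, take expectations, and read off the three error terms. First I would record that the integrability hypotheses already make everything converge in $L^1$. Taking $x=1$ in~\eqref{eq-remainder1} and recalling $X_1=1$, one gets $\sumb_{q\geq 1}{|X_q|}\leq 1+R_X(1)$, an integrable random variable; hence each $X_q$ is integrable with $|X_q|\leq 1+R_X(1)$, and the formal expansion $\prod_p(1+X_p)=\sumb_{q\geq 1}{X_q}$ holds as a genuine, almost surely absolutely convergent, identity (convergence of the infinite product and of the series being equivalent in view of~\eqref{eq-remainder1}). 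Fixing $x>0$ and writing $Z_X=\sumb_{q\leq x}{X_q}+\sumb_{q>x}{X_q}$, linearity of expectation on the finite head together with the bound $|\sumb_{q>x}{X_q}|\leq R_X(x)$ on the tail give
$$
\expect(Z_X)=\sumb_{q\leq x}{\expect(X_q)}+O\bigl(\expect(R_X(x))\bigr),
$$
the implied constant here being of modulus at most $1$.

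Next I would replace $X_q$ by $Y_q$ in the head at the cost of the second error term, writing $\sumb_{q\leq x}{\expect(X_q)}=\sumb_{q\leq x}{\expect(Y_q)}+O\bigl(\sumb_{q\leq x}{|\expect(X_q-Y_q)|}\bigr)$, and then complete the $Y$-sum to an Euler product. Here one uses that independence of $(Y_p)$ gives $\expect(Y_q)=\prod_{p\mid q}{\expect(Y_p)}$ for squarefree $q$ (and $\expect(Y_q)=0$ otherwise), so that by the absolute convergence recorded in~\eqref{eq-yq} the standard multiplicativity identity yields $\sumb_{q\geq 1}{\expect(Y_q)}=\prod_p(1+\expect(Y_p))$. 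Therefore $\sumb_{q\leq x}{\expect(Y_q)}=\prod_p(1+\expect(Y_p))-\sumb_{q>x}{\expect(Y_q)}$, and the leftover tail has modulus at most $\sumb_{q>x}{|\expect(Y_q)|}$, which is the third error term. Collecting the three contributions — each entering the total error with a coefficient of modulus at most $1$ — produces exactly the estimate in the statement, with an absolute implied constant bounded by $1$.

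As for difficulties, there is essentially none of substance, which is why the proposition is described as ``almost tautological''. The only points deserving a word of justification are the two interchanges of expectation with an infinite sum: the tail estimate $|\expect(\sumb_{q>x}{X_q})|\leq\expect(R_X(x))$, legitimate because $R_X(x)$ is an integrable dominating variable, and the passage from $\sumb_{q\geq 1}{\expect(Y_q)}$ to $\prod_p(1+\expect(Y_p))$, legitimate by~\eqref{eq-yq}. Both are immediate once the absolute-convergence bounds~\eqref{eq-remainder1} and~\eqref{eq-yq} are in hand. The one thing to watch in the write-up is to keep the bookkeeping of the three remainders clean enough to certify that the implied constant is at most $1$ rather than merely some larger absolute constant.
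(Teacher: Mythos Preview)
Your proposal is correct and follows essentially the same route as the paper's own proof: expand $Z_X$ over squarefree integers, truncate at $x$, take expectations, swap $X_q$ for $Y_q$ in the head, and complete the $Y$-sum to the Euler product via~\eqref{eq-yq}. The extra care you take in justifying integrability (the $x=1$ step and the dominated tail) is more explicit than the paper, which simply writes that the argument ``more or less proves itself'', but the structure is identical.
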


\begin{proof}
  This more or less proves itself: for any $x\geq 1$, write first
$$
\prod_{p}{(1+X_p)}=\sumb_{q\geq 1}{X_q}=
\sumb_{q\leq x}{X_q}+\sumb_{q>x}{X_q},
$$
then use~(\ref{eq-remainder1}) to estimate the second term, and take
the expectation, which leads to
$$
\expect(Z_X)=\sum_{q\leq x}{\expect(X_q)}+O(\expect(R_X(x))).
$$
\par
Next, we insert $Y_q$ by writing $X_q=Y_q+(X_q-Y_q)$, getting
$$
\expect(Z_X)=\sumb_{q\leq x}{\expect(Y_q)}+
\sumb_{q\leq x}{\expect(X_q-Y_q)}+
O(\expect(R_X(x)))
$$
and then use
$$
\sumb_{q\leq x}{\expect(Y_q)}=
\sumb_{q\geq 1}{\expect(Y_q)}+O\Bigl(\sumb_{q>x}{|\expect(Y_q)|}\Bigr)=
\prod_p{(1+\expect(Y_p))}+O\Bigl(\sumb_{q>x}{|\expect(Y_q)|}\Bigr),
$$
to conclude the proof.
\end{proof}

\begin{remark}\label{rm-alpha}
  Observe that by~(\ref{eq-yq}), the last term in the remainder tends
  to zero as $x\ra +\infty$. Moreover, if $R_X(x)$ is dominated by an
  integrable function as $x\ra +\infty$, the assumption that
  $R_X(x)\ra 0$ almost surely implies that the first term also tends
  to zero. Thus to conclude in practical applications, one needs to
  control the middle term.
\par
In terms of the ``extra'' parameter $h$ mentioned before the statement
of the proposition, we may typically hope for uniform estimates for
$\expect(R_X(x))$, in terms of $h$, say
$$
\expect(R_X(x))\ll h^{\alpha}x^{-\beta},\quad\quad
\alpha,\ \beta>0;
$$
if we also have a bound of the type
\begin{equation}\label{eq-sieve-axiom}
\expect(X_q)=\expect(Y_q)+O(q^{\gamma}h^{-\delta}),\quad\quad
\gamma,\ \delta>0,
\end{equation}
(or if this holds on average over $q<x$, which may often be easier to
prove, as is the case for the error term in the prime number theorem,
as shows the Bombieri-Vinogradov theorem), this leads to a remainder
term which is
$$
\ll h^{\alpha}x^{-\beta}+x^{1+\gamma}h^{-\delta}+\eps(x)
$$
with $\eps(x)\ra 0$ as $x\ra +\infty$, uniformly in $h$. 
Then we can conclude that 
\begin{equation}\label{eq-converge}
\lim_{h\ra +\infty}{\expect(Z_X)}= \prod_{p}{(1+\expect(Y_p))}
\end{equation}
by choosing $x$ suitably as a function of $h$, \emph{provided} we have
$$
\frac{\alpha}{\beta}<\frac{\delta}{\gamma+1}.
$$
\par
We will see this in action concretely in the next sections. Notice
that if $\alpha$ can be chosen arbitrarily small (i.e., $R_X(x)$ is
bounded almost uniformly in terms of $h$), then this condition can be
met.
\end{remark}

\begin{remark}\label{rm-interp}
  If we assume, instead of~(\ref{eq-assump}), that the product of
  $1+\expect(|Y_p|)$ converges, which is stronger, it follows that
  $\sum{|Y_p|}<+\infty$ almost surely (its expectation being finite),
  and hence the infinite product defining $Z_Y$ converges absolutely
  almost surely. Also, since we have
$$
\expect\Bigl(\prod_{p\leq P}{(1+Y_p)}\Bigr)=
\prod_{p\leq P}{(1+\expect(Y_p))}
$$
for all $P$, we would obtain 
$$
\expect(Z_Y)=\prod_{p}{(1+\expect(Y_p))}.
$$
provided $Z_Y$ converges dominatedly, for instance.  This formula is
also valid if $Y_p\geq 0$, by the monotone convergence theorem.  It
provides an interpretation of the right-hand side
of~(\ref{eq-converge}).
\end{remark}

\section{Moments of singular series for the $k$-tuple conjecture}
\label{sec-moments-singular}

In this section, we prove Theorem~\ref{pr-gallagher-moments},
which includes in particular Gallagher's theorem, in a way which may
seem somewhat complicated but which clarifies the result.
\par
We first assume an integer $k\geq 1$ to be fixed. We
rewrite~(\ref{eq-singular}) as
$$
\sing{h}=\prod_{p}{\Bigl(
1+\frac{p^k-\nu_p(\uple{h})p^{k-1}-(p-1)^k}{(p-1)^k}
\Bigr)}.
$$
\par
It is therefore natural to define
$$
a(p,\nu)=\frac{p^k-\nu p^{k-1}-(p-1)^k}{(p-1)^k}
$$
for all primes $p$ and real numbers $\nu$, $0<\nu\leq p$ (omitting the
dependency on $k$). We then define $a_m(p,\nu)$, for $m\in\Cc$ with
$\Reel(m)\geq 0$, by requiring that
$$
1+a_m(p,\nu)=(1+a(p,\nu))^m,
$$
with the convention $0^{m}=0$ if $\Reel(m)=0$; the condition $\nu\leq
p$ implies that $1+a(p,\nu)\geq 0$, so this is well-defined indeed.
(If we assume $\nu<p$, we may extend this to all $m\in\Cc$).
\par
We first need a technical lemma.

\begin{lemma}\label{lm-apnu}
  For $m\in\Cc$ with $\Reel(m)\geq 0$, write $m^+=0$ if $\Reel(m)<1$,
  and $m^+=m-1$ otherwise.  For all $p$ prime and $\nu$ with $1\leq
  \nu\leq \min(p,k)$, we have
\begin{align}
a_m(p,k)&\ll
\frac{|m|}{p^2}\Bigl(1+O\Bigl(\frac{1}{p^2}\Bigr)\Bigr)^{m^+},\\
a_m(p,\nu)&\ll
\frac{|m|}{p}\Bigl(1+O\Bigl(\frac{1}{p}\Bigr)\Bigr)^{m^+},\quad\text{
  if } 1\leq \nu<k,
\end{align}
where the implied constants depend only on $k$.
\end{lemma}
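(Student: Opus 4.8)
The plan is to base everything on the closed form $1+a(p,\nu)=(1-\nu/p)(1-1/p)^{-k}$, which one gets by clearing denominators in the definition of $a(p,\nu)$; write $\rho=\rho(p,\nu)$ for this quantity, so that $a_m(p,\nu)=\rho^m-1$ and $\rho\geq 0$ by the hypothesis $\nu\leq p$.

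First I would control $a(p,\nu)=\rho-1$ itself. For $1\leq\nu<k$, expanding $(1-1/p)^k$ by the binomial theorem gives
$$
\rho-1=\frac{(1-\nu/p)-(1-1/p)^k}{(1-1/p)^k}=\frac{(k-\nu)p^{-1}+O_k(p^{-2})}{(1-1/p)^k},
$$
so $|a(p,\nu)|\ll_k p^{-1}$ for \emph{all} primes $p$; moreover $0\leq\rho\leq\rho(p,1)=(1-1/p)^{1-k}=(1+(p-1)^{-1})^{k-1}\leq 1+C_kp^{-1}$, which is the point at which the implied constants must be allowed to depend on $k$ (at $p=2$, $\rho$ is as large as $2^{k-1}$). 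For $\nu=k$, where necessarily $p\geq k$, the key input is Bernoulli's inequality $(1-1/p)^k\geq 1-k/p$: for $p>k$ it gives $\rho=(1-k/p)(1-1/p)^{-k}\leq 1$, so that $1+a(p,k)\in[0,1]$, while $1-\rho=\big((1-1/p)^k-(1-k/p)\big)(1-1/p)^{-k}$ has nonnegative numerator $\sum_{j\geq2}\binom kj(-1/p)^j\ll_k p^{-2}$; hence $|a(p,k)|\ll_k p^{-2}$. (The single value $p=k$, where $\rho=0$, is degenerate and is covered by the convention on $0^m$.)

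Next comes the passage from $a$ to $a_m$, the only genuinely analytic step. I would use the elementary inequality, valid for real $x>0$ and $m\in\Cc$ with $\Reel m\geq 0$,
$$
|x^m-1|=\big|e^{m\log x}-1\big|\leq |m|\,|\log x|\,\max(1,x)^{\Reel m},
$$
which follows from $|e^z-1|\leq|z|e^{\max(\Reel z,0)}$ and $\max(\Reel(m\log x),0)=(\Reel m)\max(\log x,0)$. Applying it with $x=\rho$: for $\nu=k$ one has $\rho\in[0,1]$, so $\max(1,\rho)^{\Reel m}=1$ and (once $1-\rho\leq\tfrac12$) $|\log\rho|\leq 2(1-\rho)\ll_k p^{-2}$, whence $|a_m(p,k)|\ll_k|m|p^{-2}$, which is stronger than the asserted bound; for $1\leq\nu<k$ one has $\rho\leq 1+C_kp^{-1}$ and $|\log\rho|\leq|\rho-1|/\min(\rho,1)\ll_k p^{-1}$, so the inequality gives $|a_m(p,\nu)|\ll_k|m|p^{-1}(1+C_kp^{-1})^{\Reel m}$, and since $(1+C_kp^{-1})^{\Reel m}\ll_k(1+C_kp^{-1})^{\max(\Reel m-1,0)}=\big|(1+C_kp^{-1})^{m^+}\big|$ uniformly, this is precisely $\ll_k|m|p^{-1}(1+O(1/p))^{m^+}$. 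The finitely many primes not covered by ``$p$ large'' — and, for $\nu=k$, the value $p=k$, resp.\ for $\nu<k$ the case $\nu=p$ where $\rho=0$ — are absorbed into the $k$-dependent constant, since for those $|\log\rho|$ and $\max(1,\rho)^{\Reel m}$ are bounded in the required way by the same estimates, using $p\leq c(k)$ to swallow any lost powers of $p$.

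The real obstacle is not any single estimate but keeping the bookkeeping of the complex exponent $m$ honest: one has to split according to whether $\Reel m\geq 1$ or $\Reel m<1$, verify that the factor $\max(1,\rho)^{\Reel m}$ is genuinely absorbed into $(1+O(1/p))^{m^+}$ with an implied constant depending only on $k$ (this is precisely why that factor appears in the statement — at small primes $\rho$ grows with $k$, so $\max(1,\rho)^{\Reel m}$ cannot be bounded independently of $\Reel m$), and recognize that the extra power of $p^{-1}$ in the case $\nu=k$ is bought with the sign information $\rho\leq 1$ coming from Bernoulli, not from a naive expansion.
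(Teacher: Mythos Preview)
Your proof is correct and follows the same two-step strategy as the paper: first bound $|a(p,\nu)|$ by $p^{-2}$ or $p^{-1}$ according to whether $\nu=k$ or $\nu<k$, then pass from $a$ to $a_m$ by a mean-value estimate. The paper carries out the second step via the integral representation
\[
a_m(p,\nu)=m\,a(p,\nu)\int_0^1 (1+t\,a(p,\nu))^{m-1}\,dt
\]
and bounds the integrand by $(1+|a|)^{m^+}$, whereas you use the equivalent inequality $|x^m-1|\leq |m|\,|\log x|\,\max(1,x)^{\Reel m}$; both are avatars of the fundamental theorem of calculus applied to $t\mapsto t^m$. Your explicit appeal to Bernoulli's inequality to get $\rho(p,k)\leq 1$ is a nice observation not spelled out in the paper, and it is exactly what buys your slightly sharper bound $|a_m(p,k)|\ll_k |m|p^{-2}$ without the factor $(1+O(p^{-2}))^{m^+}$; the paper's integral argument does not use the sign of $a(p,k)$ and therefore retains that factor. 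Both arguments share the same soft spot at the degenerate boundary value $\nu=p$ (where $\rho=0$ and $|a_m|=1$): for $|m|$ small this is not literally covered by a bound of the form $\ll_k|m|/p$, but this is an artefact of the lemma's statement rather than of either proof, and is harmless in the subsequent applications since those finitely many primes can be absorbed once $m$ is fixed.
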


\begin{proof}
  Notice first that, in the stated range, we have
\begin{align*}
a(p,k)&\ll p^{-2},\\
a(p,\nu)&\ll p^{-1},\quad\text{ if } 1\leq \nu<k,
\end{align*}
where the implied constants depend only on $k$,
and then write
$$
a_m(p,\nu)=(1+a(p,\nu))^m-1=
ma(p,\nu)\int_0^{1}{
(1+ta(p,\nu))^{m-1}dt
}
$$
and estimate directly.
\end{proof}

We are now going to prove Theorem~\ref{pr-gallagher-moments}. Fix
$h\geq 1$ (though $h$ will tend to infinity at the end). We first
interpret the $m$-th moment of the singular series in probabilistic
terms, then introduce the source of its limiting value in the
framework of the previous section.
\par
Consider the finite set (again, depending on $k$)
$$
\Omega_1=\{\uple{h}=(h_i)\,\mid\, 1\leq h_i\leq h,\ h_i\text{ distinct}\},
$$
with the normalized counting measure. Denoting $h_k^*=|\Omega_1|$,
notice that
\begin{equation}\label{eq-compar}
h_k^*=h^k(1+O(h^{-1}))
\end{equation}
for $h\geq 1$, the implied constant depending only on $k$. We will
denote by $\expect_1$ and $\proba_1$ the expectation and probability
for this discrete space. So we have, for instance, that
$$
\proba_1(\nu_p=\nu)=\frac{1}{h_k^*}|
\{
\uple{h}\in\Omega_1\,\mid\, \nu_p(\uple{h})=\nu
\}
|.
$$
\par
Our goal is to find the limit as $h\ra +\infty$ of the average
$$
\frac{1}{h^*_k}
\sum_{\stacksum{|\uple{h}|\leq h}{h_i\text{ distinct}}}{
\sing{\uple{h}}^m}=\expect_1(\sing{\uple{h}}^m)
$$
(notice that, by~(\ref{eq-compar}), if the limit exists, it is also
the limit of 
$$
\frac{1}{h^k}
\sum_{\stacksum{|\uple{h}|\leq h}{h_i\text{ distinct}}}{
\sing{\uple{h}}^m},
$$
as $h\ra +\infty$).
\par
We write $X_p(\uple{h})=a(p,\nu_p(\uple{h}))$ and
$X_p(m,\uple{h})=a_m(p,\nu_p(\uple{h}))$, so that
$$
\prod_{p}{(1+X_p(m,\uple{h}))}=\sing{\uple{h}}^m
$$
by construction.
\par
Now consider a second space
$$
\Omega_2=\prod_p{(\Zz/p\Zz)^k}
$$
with the product measure of the probability counting measures on each
factor. We denote by $\omega=(\uple{h}_p)_p$ the elements of
$\Omega_2$. To avoid confusion with $\nu_p$ defined for
$\uple{h}\in\Omega_1$, we introduce the random variables
$$
\rho_p\ :\ \begin{cases}
\Omega_2\ra \{1,\ldots,k\}\\
\omega=(\uple{h}_p)_p\mapsto \text{number of distinct $h_i$ in $\Zz/p\Zz$},
\end{cases}
$$
which satisfy $1\leq \rho_p\leq \min(k,p)$.
\par
We can now define ``random'' singular series using $\Omega_2$, writing
$Y_p=a(p,\rho_p)$ and considering the Euler product
$$
\prod_p{(1+Y_p)},
$$
and similarly with $Y_p(m)=a_m(p,\rho_p)$ and
$$
\prod_p{(1+Y_p(m))}=\Bigl(\prod_p{(1+Y_p)}\Bigr)^m.
$$
\par
We denote by $\proba_2$ and $\expect_2$ the probability and
expectation for this space.
By construction of $\Omega_2$, the random variables $(\rho_p)$ are
independent, and so are the $(Y_p)$, and the $(Y_p(m))$ for a given
$m$. Note also that the components $\uple{h}_p$ are equidistributed:
for any prime $p$ and any $a\in (\Zz/p\Zz)^k$, we have
\begin{equation}\label{eq-law}
\proba_2(\uple{h}_p=a)=\frac{1}{p^k}.
\end{equation}
\par
We now use Proposition~\ref{pr-euler} to compare the average
$\expect_1(\sing{h}^m)$ with 
$$
\prod{\expect_2((1+Y_p)^m)}.
$$
\par
Although this proposition is phrased with a single probability space
$\Omega$ on which both Euler vectors are defined, this is not a
serious issue and the statement remains valid, provided the
expectations are suitably subscripted and one writes
$$
\Bigl|\expect_1(X_q(m))-\expect_2(Y_q(m))\Bigr|
$$
on the right-hand side instead of
$|\expect(X_q(m)-Y_q(m))|$.\footnote{\ We could also simply consider
  $\Omega=\Omega_1\times \Omega_2$ with the product measure, or
  equivalently (and maybe more elegantly) assume that we start with
  some space $\Omega$ and two vectors $(X_p)$, $(Y_p)$, distributed
  according to the prescription of $\Omega_1$ and $\Omega_2$
  respectively, i.e., with
\begin{align*}
  \proba(X_p=a)&=\frac{1}{h_k^*}|\{\uple{h}\in\Omega_1\,\mid\,
  a(p,\nu_p(\uple{h}))=a \}|,\\
  \proba(Y_p=a)&=\frac{1}{p^k}|\{\uple{h}\in (\Zz/p\Zz)^k\,\mid\,
  a(p,\rho_p(\uple{h}))=a \}|.
\end{align*}
}
\par
We start by estimating the tail $R(x)=R_{X(m)}(x)$ of the Euler
product defining $\sing{h}^m$. In keeping with probabilistic
conventions, we omit the argument $\uple{h}\in\Omega_1$ in many
places. Denoting
$$
\Delta(\uple{h})=\Bigl|\prod_{i<j}{(h_i-h_j)}\Bigr|\geq 1,
$$
and noting that $\nu_p=k$ unless $p\mid \Delta$, we have from
Lemma~\ref{lm-apnu} the bound
$$
|X_p(m)|\ll |m|\Bigl(1+O\Bigl(\frac{(p,\Delta)}{p^2}\Bigr)\Bigr)^{m^+}
(p,\Delta)p^{-2}
$$
for some $C>0$ (depending only on $k$) and all $\uple{h}$, $m$ (with
$\Reel(m)\geq 0$) and $p$, the implied constant depending only on $k$
(this justifies, in particular, the convergence of the Euler product
$Z_X$ for every $\uple{h}$). Hence, taking the product over $p\mid q$
for a squarefree integer $q$, we get
$$
|X_q(m)|\leq
(|m|B)^{\omega(q)}(q,\Delta)q^{-2}
\prod_{p\mid q}{\Bigl(1+C\frac{(p,\Delta)}{p^2}\Bigr)^{m^+}}
$$
for some constants $B>0$ and $C\geq 0$ depending only on $k$.  
Since $\Delta$ is bounded by
\begin{equation}\label{eq-discrim}
|\Delta|\leq (2h)^{k^2},
\end{equation}
a standard computation with sums of multiplicative functions leads to
$$
\sumb_{q>x}{|X_q(m)|}\ll x^{-1}(\log 2hx)^{D}
$$
for $x\geq 2$ and some constant $D\geq 0$, depending on $k$ and $m$.
\par
The next step is to justify the analogue of the convergence
of~(\ref{eq-assump}); more precisely, we have
\begin{equation}\label{eq-integrable}
\prod_{p}{(1+\expect_2(|Y_p(m)|))}<+\infty.
\end{equation}
\par
Indeed, Lemma~\ref{lm-apnu} leads to
$$
\expect_2(|Y_p(m)|)\ll p^{-2}+p^{-1}\proba_2(\rho_p<k)\ll p^{-2}
$$
for $p\geq 2$, where the implied constant depends on $k$ and $m$,
since it is clear that we have
\begin{equation}\label{eq-proba-rhop}
\proba_2(\rho_p<k)\leq \frac{k(k-1)}{2p}
\end{equation}
for all primes $p$ and $k\geq 1$ (write that the event $\{\rho_p<k\}$
is the union -- not necessarily disjoint -- of the $k(k-1)/2$ events
$h_i=h_j$ with $i\not=j$, each of which has probability $1/p$ by
uniform distribution~(\ref{eq-law})). By independence, we then also
get
\begin{equation}\label{eq-expyq}
\expect_2(|Y_q(m)|)\leq A^{\omega(q)}q^{-2}.
\end{equation}
for all squarefree integers $q$ and some constant $A\geq 1$, which
depends only on $k$ and $m$.
\par
Finally, it remains to estimate $\expect_1(X_q(m))-\expect_2(Y_q(m))$.
We claim that, for any $a\in\Cc$, we have
\begin{equation}\label{eq-equidist}
\proba_1(X_q(m)=a)=\Bigl(1+O\Bigl(\frac{q}{h}\Bigr)\Bigr)
\proba_2(Y_q(m)=a)+O\Bigl(\frac{k^{\omega(q)}}{h}\Bigr)
\end{equation}
where the implied constants depend only on $k$. Assuming this, and
noting that $X_q(m)$ and $Y_q(m)$ take the same finitely many values
(at most $k^{\omega(q)}$ distinct values, which are
$$
\ll \frac{F^{\omega(g)}}{q}
$$
where the implied constant and $F$ depend on $m$ and $k$),
it follows that
$$
\expect_1(X_q(m))=\Bigl(1+O\Bigl(\frac{q}{h}\Bigr)\Bigr)
\expect_2(Y_q(m))+O\Bigl(\frac{G^{\omega(q)}}{h}\Bigr),
$$
where $G$ depends on $m$ and $k$, leading in turn to
$$
\Bigl|\expect_1(X_q(m))-\expect_2(Y_q(m))\Bigr|\ll
\frac{q}{h}\expect_2(|Y_q(m)|)+\frac{G^{\omega(q)}}{h}\ll
\frac{E^{\omega(q)}}{h}
$$
(see~(\ref{eq-expyq})), where the implied constant depends only on $k$
and $m$, as does $E$.
\par
Summing over $q<x$, it then follows from Proposition~\ref{pr-euler}
that
\begin{multline*}
\frac{1}{h_k^*}\sums_{\uple{h}}{\sing{h}^m}=
\expect_1\Bigl(\prod_{p}{(1+X_p(m))}\Bigr)=
\prod_p{(1+\expect_2(Y_p(m)))}+\\
O\Bigl(
xh^{-1}(\log 2hx)^B+x^{-1}(\log 2hx)^D
\Bigr)
\end{multline*}
for some $B$ depending on $k$ and $m$. Choosing for instance
$x=h^{1/2}$ leads to the existence of the $m$-th moment of singular
series, with limiting value given by
\begin{equation}\label{eq-moment}
\mu_k(m)=
\prod_{p}{(1+\expect_2(Y_p(m)))}=
\prod_{p}{\Bigl(1-\frac{1}{p}\Bigr)^{-km}
\Bigl\{\frac{1}{p^k}
\sum_{\uple{h}\in(\Zz/p\Zz)^k}{
\Bigl(1-\frac{\rho_p(\uple{h})}{p}
\Bigr)^m}\Bigr\}
}.
\end{equation}
\par
It only remains to prove~(\ref{eq-equidist}). Note that this is
clearly an expression of quantitative equidistribution (or convergence
in law) of $X_q$ to $Y_q$ as $h\ra +\infty$.\footnote{\ It can also be
interpreted as a form of ``sieve axiom''.}
\par
The proof is quite simple.  First of all, given arbitrary integers
$s_p$ with $p\mid q$, we have
\begin{align*}
  \proba_1(\nu_p(\uple{h})=s_p\text{ for } p\mid q)
  &=\frac{1}{h_k^*}\sums_{\stacksum{\nu_p(\uple{h})=s_p\text{ for }
      p\mid q}{|\uple{h}|
      \leq h}}{1}\\
  &=\frac{1}{h_k^*} \multsum_{\stacksum{\rho_p(\uple{h}_p)=s_p}
    {\uple{h}_p\in(\Zz/p\Zz)^k}} \sums_{\stacksum{|\uple{h}|\leq
      h}{\uple{h}\equiv \uple{h}_p\mods{p\mid q}}}{1}
\end{align*}
(where there are as many outer sums in the last line as there are
primes dividing $q$, and the last sum involves summation conditions
for all $p\mid q$). This inner sum is 
\begin{equation}\label{eq-mid-equidist}
\sums_{\stacksum{|\uple{h}|\leq h}{\uple{h}\equiv
    \uple{h}_p\mods{p\mid q}}}{1}
=\sum_{\stacksum{|\uple{h}|\leq h}{\uple{h}\equiv
    \uple{h}_p\mods{p\mid q}}}{1}
+O(h^{k-1})
\end{equation}
where the implied constant depends on $k$ (i.e., we now forget the
condition on $\uple{h}$ to have distinct components). Lattice-point
counting leads to
$$
\sum_{\stacksum{|\uple{h}|\leq h}{\uple{h}\equiv
    \uple{h}_p\mods{p\mid q}}}{1}=
\frac{h^k}{q^k}\Bigl(1+O\Bigl(\frac{q}{h}\Bigr)\Bigr)
$$
where the implied constant depends again only on $k$. In view of the
equidistribution of $\uple{h}_p$ for $(\uple{h}_p)_p\in \Omega_2$, 
we therefore derive from the above the following \emph{quantitative
  equidistribution} result:
\begin{equation}\label{eq-proba-nup}
\proba_1\bigl(\nu_p(\uple{h})=s_p\text{ for } p\mid q\bigr)
=\proba_2\bigl(\rho_p(\uple{h}_p)=s_p\text{ for } p\mid q\bigr)
\Bigl(1+O\Bigl(\frac{q}{h}\Bigr)
\Bigr)+O\Bigl(\frac{1}{h}\Bigr).
\end{equation}
\par
Now to derive~(\ref{eq-equidist}), we need only observe that $Y_q(m)$
and $X_q(m)$ are ``identical'' functions of $\rho_p$ and $\nu_p$
respectively (for $p\mid q$).  Hence~(\ref{eq-proba-nup})
implies~(\ref{eq-equidist}) by summing over all possible values of
$(s_p)_{p\mid q}$ leading to a given $a$, using the fact that there
are at most $k^{\omega(q)}$ such values (the latter being a very rough
estimate!).
\par
It remains to prove the symmetry property~(\ref{eq-symmetric}) to
finish the proof of Theorem~\ref{pr-gallagher-moments}.  We note
in advance that since $\sing{h}=1$ for all $1$-tuple $\uple{h}$, we
have $\mu_1(m)=1$ for all $m\geq 1$, and hence $\mu_k(1)=1$ for all
$k\geq 1$, which is Gallagher's result~(\ref{eq-gallagher}).
\par
The symmetry turns out to be true ``locally'', i.e., the $p$-factor of
the Euler products~(\ref{eq-moment}) defining $\mu_k(m)$ and
$\mu_m(k)$ coincide for all $p$ and integers $k$, $m\geq 1$.
\par
There are different ways to see this, and the following seems to
encapsulate the origin of the phenomenon. Given a finite set $F$
(which will be $\Zz/p\Zz$), consider the following obviously
symmetric expression of $m$ and $k$:
$$
\frac{1}{|F|^{m+k}} \dblsum_{\stacksum{\uple{x}\in F^m, \ \
    \uple{h}\in F^k}{\{x_i\}\cap \{h_j\}=\emptyset}}{1}
$$
(which is the probability, for the normalized counting measure on
$F^{k+m}$, that a pair of a $k$-tuple and an $m$-tuple, both of
elements of $F$, do not contain a common element). Then it can be
interpreted either as
\begin{align*}
\frac{1}{|F|^m}\sum_{\tau=1}^m{
  \sum_{\stacksum{\uple{x}\in F^m}{\rho(\uple{x})=\tau}}
    \frac{1}{|F|^k}
    \sum_{\stacksum{\uple{h}\in F^k}{\{h_j\}\cap
        \{x_i\}=\emptyset}}{1}}
&=\frac{1}{|F|^m}\sum_{\tau=1}^m{
  \sum_{\stacksum{\uple{x}\in F^m}{\rho(\uple{x})=\tau}}
    \Bigl(1-\frac{\tau}{|F|}\Bigr)^k}\\
&=\frac{1}{|F|^m}
\sum_{\uple{x}\in F^m}{
    \Bigl(1-\frac{\rho(\uple{x})}{|F|}\Bigr)^k}
\end{align*}
or (by the same computation with $m$ and $k$ reversed) as
$$
\frac{1}{|F|^k}
\sum_{\uple{h}\in F^k}{
    \Bigl(1-\frac{\rho(\uple{h})}{|F|}\Bigr)^m},
$$
(using $\rho(\cdot)$ to denote the number of distinct elements in $F$
of an $m$-tuple, then of a $k$-tuple). 
\par
Applied with $F=\Zz/p\Zz$, up to the symmetric factor $(1-1/p)^{-km}$
in~(\ref{eq-moment}), the first is the $p$-factor for $\mu_m(k)$, and
the second is the $p$-factor for $\mu_k(m)$, showing that they are
indeed equal.

\begin{remark}
Quantitatively, we have proved that 
$$
\sums_{|\uple{h}|\leq h}{\sing{h}^m}=
\mu_k(m)h^*_k+O(h^{k-1/2+\eps}),
$$
for any $\eps>0$, where the implied constant depends on $k$ and $m$.
For $m=1$, Montgomery and Soundararajan~\cite[(17), p.
593]{mont-sound} have obtained a more refined expansion with
contributions of size $h^{k-1}\log h$ and $h^{k-1}$, and error term of
size $h^{k-3/2+\eps}$.
\end{remark}

\begin{remark} The fact that $\mu_k(1)=1$ can be used to recover the
  combinatorial identities used by Gallagher~\cite[p. 7--8]{gallagher}
  instead of the probabilistic phrasing above. We review this for
  completeness: in order to prove $\mu_k(1)=1$, it suffices to show
  that the average of $a(p,\rho_p)$ is zero. We have
$$
\sum_{\uple{h}\in(\Zz/p\Zz)^k}{
a(p,\rho_p(\uple{h}))}
=\sum_{\nu=1}^p{a(p,\nu)|\{\uple{h}\in (\Zz/p\Zz)^k\,\mid\,
  \rho_p(\uple{h})=\nu \}|}
$$
and on the other hand, we have
$$
|\{\uple{h}\in (\Zz/p\Zz)^k\,\mid\, \rho_p(\uple{h})=\nu \}|
=\binom{p}{\nu} \stirling{k}{\nu},
$$
where $\stirling{k}{\nu}$ is the number of surjective maps from a set
with $k$ elements to one with $\nu$ elements\footnote{\ This is
  denoted $\sigma(k,\nu)$ in~\cite{gallagher}, and it is \emph{not}
  the standard notation, which would write $r!\stirling{k}{r}$
  instead.}; indeed, a $k$-tuple $\uple{h}$ with $\nu$ distinct values
is the same as a map $\{1,\ldots, k\}\ra \Zz/p\Zz$ with image of
cardinality $\nu$, i.e., the set of such tuples is the disjoint union
of those sets of surjective maps
$$
\{1,\ldots, k\}\ra I
$$
over $I\subset \Zz/p\Zz$ with order $\nu$.
\par
Therefore, Gallagher's result follows from the identity
$$
\sum_{\nu=1}^p{a(p,\nu)
\binom{p}{\nu} \stirling{k}{\nu}}=0
$$
which is proved in~\cite[p. 7]{gallagher}, and which we have therefore
reproved. Similarly, the identities
$$
\sum_{\nu=1}^p{\binom{p}{\nu} \stirling{k}{\nu}}=p^k,\quad
\sum_{\nu=1}^p{\nu\binom{p}{\nu} \stirling{k}{\nu}}=p^{k+1}-p(p-1)^k,
$$
of~\cite[p. 8]{gallagher} can be derived from the proof that the
$p$-factor for $\mu_k(1)$ is $1$.
\end{remark}

\begin{remark}
From~(\ref{eq-ktuple}), one can guess that $\mu_k(m)=\mu_m(k)$ for
$m\geq 1$ integer, by computing
$$
\sum_{|\uple{h}|\leq h}{\Bigl(\sum_{n\leq N}{
\prod_{1\leq i\leq k}\Lambda(n+h_i)}\Bigr)^m}
=\sum_{|\uple{h}|_k\leq h}\sum_{|\uple{n}|_m\leq N}{
\prod_{\stacksum{1\leq i\leq k}{1\leq j\leq m}}
{\Lambda(n_j+h_i)}}
$$
(where $\uple{n}$ is an $m$-tuple), which is a symmetric expression in
$\uple{n}$ and $\uple{h}$, except for the ranges of summation, and
which should be asymptotic to either $\mu_k(m)h^kN^m$ or $\mu_m(k)
h^kN^m$ by a uniform $k$-tuple conjecture.  In fact, the computation
we did amounts to doing the same argument locally (i.e., looking on
average over $\uple{h}$ at the distribution of integers such that, for
a fixed prime $p$, $n+h_1$,\ldots, $n+h_k$ are not divisible by $p$).
\par
This symmetry $\mu_k(m)=\mu_m(k)$, despite the simplicity of its
proof, is a very strong property, as pointed out to us by A.
Nikeghbali. Indeed, write $X_k=Z_{Y,k}$, the random variable given by
the random singular series. Since we have
$$
\mu_k(m)=\int_{\Rr^+}{t^md\nu_k(t)}=E(X_k^m),
$$
the symmetry implies that the sequence $(E(X_k^m))_k$, for a
\emph{fixed} value of $m$, is the sequence of moments of a probability
distribution of $[0,+\infty[$, which is a highly non-trivial property.
We refer to the survey~\cite{simon} of the classical theory
surrounding the ``moment problems'', noting that from Theorem~1 of
loc. cit. it follows that, for any fixed $m\geq 1$, we have
$$
\dblsum_{\stacksum{0\leq i\leq N}{0\leq j\leq N}}{
\alpha_i\bar{\alpha_j} \mu_{i+j}(m)
}> 0,\quad\quad
\dblsum_{\stacksum{0\leq i\leq N}{0\leq j\leq N}}{
\alpha_i\bar{\alpha_j} \mu_{i+j+1}(m)
}> 0,
$$
for any $N\geq 1$ and any complex numbers $(\alpha_i)\in \Cc^N-\{0\}$.
\par
It would be quite interesting to know what other types of natural
sequences of random variables (or probability distributions) satisfy
the relation $E(X_k^m)=E(X_m^k)$. One fairly general construction is
as follows (this was pointed out by A. Nikeghbali and P. Bourgade):
just take $X_n=Z^n$ for $Z$ a random variable such that all moments of
$Z$ exist, or a bit more generally, take a sequence $(X_n)$ of
positive random variables such that the $X_n^{1/n}$ are identically
distributed. But note that the variables we encountered are not of
this type.
\end{remark}

\begin{example}\label{ex-m2}
  Let $m=2$. We find (using the symmetry property) that the
  mean-square of $\sing{h}$ is given by
$$
\lim_{h\ra +\infty}{
\frac{1}{h^k}\sums_{|\uple{h}|\leq h}{
\sing{h}^2
}
}=\mu_k(2),
$$
where
$$
\mu_k(2)=\prod_{p}{\Bigl(
\Bigl(1-\frac{1}{p}\Bigr)
\Bigl(1-\frac{2}{p}\Bigr)^k+
\frac{1}{p}\Bigl(1-\frac{1}{p}\Bigr)^k
\Bigr)\Bigl(1-\frac{1}{p}\Bigr)^{-2k}}.
$$
\par
In particular, we find (using \url{Pari/GP} for instance):
\begin{gather*}
\mu_2(2)=2.300\ldots\quad\quad
\mu_3(2)=6.03294\ldots\\
\mu_4(2)=17.562\ldots\quad\quad
\mu_5(2)=55.255\ldots\\
\mu_6(2)=184.18\ldots
\end{gather*}
\par
Note that the second (and higher) moments increase quickly with $k$
(as proved in Proposition~\ref{pr-growth} in the next section).  This
is explained intuitively by the fact that $\sing{h}$ is often zero: for
instance, the $2$-factor of $\sing{h}$ is zero unless all $h_i$ are of
the same parity, which happens with probability $2^{1-k}$ only (see
Example~\ref{ex-zero} for a more precise estimate). For those, of
course, the $2$-factor is very large (equal to $2^{k-1}$).
\end{example}

\section{Growth and distribution of moments of singular series}
\label{sec-bound}

In this section, we will prove Theorem~\ref{pr-limiting}, using
the methods of moments. For this, we consider the problem (which has
independent interest) of determining the growth of $\mu_k(m)$.  We
look at the dependency on $m$ for fixed $k$, or equivalently the
dependency on $k$ for fixed $m$, by symmetry (as in
Example~\ref{ex-m2}). The result is that the moments grow just a bit
faster than exponentially.

\begin{proposition}\label{pr-growth}
For any fixed $k\geq 1$, we have
$$
\log \mu_k(m)=km\log\log 3m+O(m),\quad\quad\text{ for } m\geq 1,
$$
where the implied constant depends on $k$.
\end{proposition}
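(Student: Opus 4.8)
\emph{Proof proposal.}

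The plan is to estimate the explicit Euler product~(\ref{eq-moment}) factor by factor. Grouping the $k$-tuples $\uple{h}\in(\Zz/p\Zz)^k$ according to their number of distinct entries, the $p$-th factor of $\mu_k(m)$ is
$$
M_p=\Bigl(1-\frac1p\Bigr)^{-km}\frac1{p^k}\sum_{t=1}^{\min(k,p)}\binom{p}{t}\stirling{k}{t}\Bigl(1-\frac tp\Bigr)^{m},
$$
where $\stirling{k}{t}$ denotes the number of surjections $\{1,\dots,k\}\to\{1,\dots,t\}$; recall from the closing remarks of Section~\ref{sec-moments-singular} that $\sum_t\binom{p}{t}\stirling{k}{t}=p^k$ and $\sum_t t\binom{p}{t}\stirling{k}{t}=p^{k+1}-p(p-1)^k$, so that $M_p=1$ when $m=1$. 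Since $M_p>0$ for every $p$, the statement reduces to showing $\sum_p\log M_p=km\log\log 3m+O(m)$, and the slow growth of $\log\mu_k(m)$ is entirely a large-$m$ effect.

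First I would dispose of the finitely many primes $p\le k$: keeping only the $t=1$ term for a lower bound, and using $(1-t/p)^m\le1$ with $\sum_t\binom{p}{t}\stirling{k}{t}=p^k$ for an upper bound, gives
$$
p^{1-k}\Bigl(1-\frac1p\Bigr)^{-(k-1)m}\le M_p\le\Bigl(1-\frac1p\Bigr)^{-km},
$$
hence $\log M_p=O_p(m)$, and these boundedly many primes contribute $O(m)$ in total.

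For $p>k$ I would split at a threshold $P=P(m)$ of order $m/\log m$, which is where the $t=1$ term of the inner sum stops dominating: the ratio of the $t$-th term to the $(t{+}1)$-st is $\asymp_k p^{-1}\bigl(1+(p-t-1)^{-1}\bigr)^{m}\asymp_k p^{-1}e^{m/p}$, which is $\gg1$ for $p$ below a threshold of order $m/\log m$. On $k<p\le P$ the inner sum thus coincides, up to a factor $1+O_k(pe^{-m/p})$, with its $t=1$ term $p(1-1/p)^m$, giving an explicit formula for $M_p$. On $P<p\le m$ the dominant term has an index $t_0(p)\le k$; since $t_0$ is bounded this yields $\lvert\log M_p\rvert\ll_k\log p+m/p$, and as $\sum_{P<p\le m}\log p\ll m$ while $\sum_{P<p\le m}p^{-1}=\log\log m-\log\log P+o(1)=o(1)$, this range contributes only $O(m)$. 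On $p>m$ the $t=k$ term dominates, $\prod_{j=0}^{k-1}(1-j/p)\cdot\bigl((1-k/p)(1-1/p)^{-k}\bigr)^{m}\to1$, and a careful expansion — in which the $O(p^{-1})$ and $O(mp^{-2})$ discrepancies between $\log\prod_{j}(1-j/p)$, $m\log\!\bigl[(1-k/p)(1-1/p)^{-k}\bigr]$ and the $t=k-1$ term cancel — shows $\log M_p\ll p^{-2}+mp^{-3}$, a summable contribution of size $o(1)$.

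It remains to sum $\log M_p$ over $k<p\le P$. Feeding in the formula for $M_p$ on this range and applying Mertens' theorem in the shape $\sum_{p\le P}\bigl(-\log(1-1/p)\bigr)=\log\log P+O(1)$, together with $\sum_{p\le P}\log p\ll P=O(m/\log m)$ and $\log\log P=\log\log(m/\log m)=\log\log 3m+O(1)$, produces the leading term $km\log\log 3m$ with error $O(m)$, which is the asserted estimate. The main obstacle is precisely this last step: one has to pin down the dominant local factor on the range $p\lesssim m/\log m$ sharply enough that the constant multiplying $m\log\log 3m$ emerges correctly; the treatment of every other range of primes, including the $p\le k$ factors, is routine and of size $O(m)$.
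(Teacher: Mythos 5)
Your plan (exact local factors via the surjection counts $\binom{p}{t}\stirling{k}{t}$, a threshold at $p\asymp m/\log m$ below which the $t=1$ term dominates, Mertens for the main range, and cancellation-based estimates for $p>m$) is coherent and in fact finer than the paper's argument, but the final step does not produce what you assert. For $k<p\leq P$ the dominant term of the inner sum is $p\,\stirling{k}{1}(1-1/p)^m\,p^{-k}=p^{1-k}(1-1/p)^m$, so that
$$
M_p=\Bigl(1-\frac1p\Bigr)^{-km}p^{1-k}\Bigl(1-\frac1p\Bigr)^{m}\bigl(1+O_k(\cdot)\bigr)
=\Bigl(1-\frac1p\Bigr)^{-(k-1)m}p^{1-k}\bigl(1+O_k(\cdot)\bigr):
$$
the $(1-1/p)^m$ carried by the $t=1$ term cancels one of the $km$ powers. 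Feeding this into Mertens gives $\sum_{k<p\leq P}\log M_p=(k-1)m\log\log 3m+O(m)$, not $km\log\log 3m+O(m)$; since your other ranges are correctly shown to contribute $O(m)$, your method, executed exactly as you describe, proves $\log\mu_k(m)=(k-1)m\log\log 3m+O(m)$ and cannot yield the stated leading constant.

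The point to understand is that this is not a bookkeeping slip on your side to be repaired: the proposition as printed is wrong, and your computation is the correct one. Two checks: for $k=1$ we have $\mu_1(m)=1$ (Theorem~\ref{pr-gallagher-moments}), incompatible with $\log\mu_1(m)=m\log\log 3m+O(m)$; and since $1-\rho_p/p\leq 1-1/p$ pointwise, each local factor in~(\ref{eq-moment}) is at most $(1-1/p)^{-(k-1)m}$, which combined with the paper's own (correct) treatment of the primes $p\geq km$ forces $\log\mu_k(m)\leq (k-1)m\log\log 3m+O(m)$. The paper's proof goes wrong in its lower bound: on the event $\rho_p=1$, of probability $p^{1-k}$, the integrand is $(1-1/p)^m$, whereas the ``crude lower bound'' uses $(1-1/p)^k$, and the two differ decisively in the relevant regime $m>k$; with the correct exponent the paper's own subproduct over $p\leq m$ also gives $(k-1)m\log\log 3m+O(m)$, matching the corrected upper bound. (Consistently, the corrected consequence $\log\mu_k(2)=k\log\log 3k+O(k)$ makes the Cauchy lower bound in Example~\ref{ex-zero} actually agree, up to $\exp(O(k))$, with the value $\proba_2(Z_{Y,k}\neq 0)=\exp(-k\log\log 3k+O(k))$ computed there, and fits the numerics of Example~\ref{ex-m2} better than $2k\log\log 3k$.) So the honest conclusion of your argument is the asymptotic with constant $k-1$; you should state that and flag the discrepancy with the proposition rather than claim the printed constant emerges from the Mertens step.
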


\begin{proof}
  We use the formula~(\ref{eq-moment}), written in the form
$$
\mu_k(m)=\prod_p{
\Bigl(1-\frac{1}{p}\Bigr)^{-km}
\expect_2\Bigl(\Bigl(1-\frac{\rho_p}{p}\Bigr)^m\Bigr).
}
$$
\par
We will prove first that
$$
\log \mu_k(m)\geq km\log\log 3m+O(m),
$$
for $m\geq 1$, with an implied constant depending on $k$, before
proving the corresponding upper bound.
\par
We start by checking that all terms in the Euler product are $\geq 1$,
i.e., for all primes $p$, all integers $k$ and all real numbers $m\geq
1$, we have
\begin{equation}\label{eq-geq1}
\expect_2\Bigl(\Bigl(1-\frac{\rho_p}{p}\Bigr)^m\Bigr)
\geq \Bigl(1-\frac{1}{p}\Bigr)^{mk}.
\end{equation}
\par
Indeed, by the symmetry between the $p$-factor for $\mu_k(1)$ and for
$\mu_1(k)$, we have
$$
\Bigl(1-\frac{1}{p}\Bigr)^k=
\expect_2\Bigl(1-\frac{\rho_p}{p}\Bigr),
$$
while raising to the $m$-th power and applying H\"older's inequality
gives 
$$
\Bigl(\expect_2\Bigl(1-\frac{\rho_p}{p}\Bigr)\Bigr)^m
\leq \expect_2\Bigl(\Bigl(1-\frac{\rho_p}{p}\Bigr)^m\Bigr).
$$
\par
From this we can bound $\mu_k(m)$ from below by any subproduct, and
we look at
$$
\mu_k^*(m)=
\prod_{p\leq m}{\Bigl(1-\frac{1}{p}\Bigr)^{-km}
\expect_2\Bigl(\Bigl(1-\frac{\rho_p}{p}\Bigr)^m\Bigr)}.
$$
\par
The probability that $\rho_p$ is $1$ is clearly equal to $p^{-(k-1)}$
(there are only $p$ $k$-tuples with this property). Hence we have 
crude lower bounds
$$
\expect_2\Bigl(\Bigl(1-\frac{\rho_p}{p}\Bigr)^m\Bigr)
\geq \frac{1}{p^{k-1}}\Bigl(1-\frac{1}{p}\Bigr)^k
$$
and
$$
\mu_k(m)\geq \mu_k^*(m)\geq 
\prod_{p\leq
  m}{\Bigl(1+\frac{1}{p-1}\Bigr)^{k(m-1)}\frac{1}{p^{k-1}}}.
$$
\par
The logarithm of this expression is easily bounded from below as
follows:
\begin{align*}
\log \mu_k(m)&\geq k(m-1)\sum_{p\leq m}{\log\Bigl(1+\frac{1}{p-1}\Bigr)}
-(k-1)\sum_{p\leq m}{\log p}\\
&=km\log\log 3m+O(m),
\end{align*}
for $m\geq 2$, the implied constant depending only on $k$, by standard
estimates, and we can incorporate trivially $m=1$ also.
\par
To prove the corresponding upper bound, we split the Euler
product~(\ref{eq-moment}) into two ranges: we write
$$
\mu_k(m)=\mu^{(1)}_k(m)\mu_k^{(2)}(m),
$$
where $\mu^{(1)}_k(m)$ is the product over primes $p<km$ (which
includes the range used for the lower bound), while $\mu^{(2)}_k(m)$
is the product over the other primes $p\geq km$. We will show that
$$
\log \mu_k^{(1)}(m)\leq km\log\log 3m+O(m),\quad \log
\mu_k^{(2)}(m)\ll \frac{m}{\log 2m},
$$
with implied constants depending on $k$, and this will conclude the
proof.
\par
We start with small primes, and simply bound the expectation of
$(1-\rho/p)^m$ by the trivial bound $1$; this leads to
$$
\log \mu_k^{(1)}(m)\leq -km \sum_{p<km}{\log\Bigl(1-\frac{1}{p}\Bigr)}
=km\log\log 3m+O(m),
$$
where the implied constant depends on $k$, again by standard
estimates.
\par
Next, we estimate $\mu_k^{(2)}(m)$ more carefully. The logarithm (say
$\mathcal{L}(x)$) of the product restricted to $km\leq p\leq x$ is
given by
$$
\mathcal{L}(x)=-km\sum_{km\leq p\leq x}{\log (1-p^{-1})} +\sum_{km\leq
  p\leq x}{\log \expect_2\Bigl(\Bigl(1-\frac{\rho_p}{p}\Bigr)^m\Bigr)}.
$$
\par
Using~(\ref{eq-proba-rhop}), we write first, for $p\geq km$, the upper
bound 
\begin{align*}
\expect_2\Bigl(\Bigl(1-\frac{\rho_p}{p}\Bigr)^m\Bigr)&\leq
\Bigl(1-\frac{k}{p}\Bigr)^m(1-\proba_2(\rho_p<k))+
\proba_2(\rho_p<k)\\
&=\Bigl(1-\frac{k}{p}\Bigr)^m+
\proba_2(\rho_p<k)\Bigl(1-\Bigl(1-\frac{k}{p}\Bigr)^m\Bigr)\\
&\leq \Bigl(1-\frac{k}{p}\Bigr)^m+
\frac{mk^2(k-1)}{2p^2}\\
&\leq
1-\frac{mk}{p}+\frac{m(m-1)}{2}\frac{k^2}{p^2}
+\frac{mk^2(k-1)}{2p^2},\\
&=1-\frac{mk}{p}+\frac{m^2k^2}{2p^2}+\frac{mA_k}{2p^2}
\end{align*}
(with $A_k=k^3-2k^2$) since
$$
1-mx\leq (1-x)^m\leq 1-mx+\frac{m(m-1)}{2}x^2\quad\quad
\text{ for } 0\leq x\leq 1,\ m\geq 1.
$$
\par
Moreover, we have $\log(1-x)\leq -x-x^2/2$ for $0\leq x<1$, and hence
after some rearranging, we obtain
\begin{multline*}
\log \expect_2\Bigl(\Bigl(1-\frac{\rho_p}{p}\Bigr)^m\Bigr)
\leq 
-\frac{mk}{p}+\frac{m^2k^2}{2p^2}+\frac{mA_k}{2p^2}
-\frac{1}{2}
\Bigl(
\frac{mk}{p}-\frac{m^2k^2}{2p^2}-\frac{mA_k}{2p^2}
\Bigr)^2\\
=-\frac{mk}{p}+
\frac{m^3k^2}{p^3}
-\frac{m^4k^4}{8p^4}
+\frac{mA_k}{2p^2}
-\frac{m^2kA_k}{2p^3}
-\frac{m^2A_k^2-2m^3k^2A_k}{8p^4},
\end{multline*}
the terms involving $(m^2k^2)/(2p^2)$ having cancelled out. 
\par
Summing over $km\leq p\leq x$, we can let $x$ go to infinity in all
but the first resulting term since they define convergent series;
bounding the tail by
$$
\sum_{p>km}{\frac{1}{p^{\sigma}}}\ll (km)^{1-\sigma}(\log 2km)^{-1},
$$
leads to
$$
\sum_{km\leq p\leq x}{
\log \expect_2\Bigl(\Bigl(1-\frac{\rho_p}{p}\Bigr)^m\Bigr)
}
\leq -km\sum_{km\leq p\leq x}{\frac{1}{p}}
+O\Bigl(\frac{m}{\log 2m}\Bigr)
$$
for all $m$ and $x\geq km$, where the implied constant depends on $k$.
Finally,
$$
\log\mathcal{L}(x)\leq -km\sum_{km<p\leq x}{
\Bigl(\frac{1}{p}+\log\Bigl(1-\frac{1}{p}\Bigr)\Bigr)
}
+O\Bigl(\frac{m}{\log 2m}\Bigr),
$$
and since $p^{-1}+\log(1-p^{-1})$ defines an absolutely convergent
series with tail (for $p>y$) decreasing like $y^{-1}(\log y)^{-1}$, we
obtain the desired bound for
$$
\log \mu_k^{(2)}(m)=\lim_{x\ra +\infty}{\mathcal{L}(x)}.
$$
\end{proof}

The existence of a limiting distribution
(Theorem~\ref{pr-limiting}) is an easy consequence of this.

\begin{corollary}\label{cor-distrib}
  Let $k\geq 1$ be a fixed integer. As $h$ goes to infinity, the
  singular series $\sing{h}$ for $\uple{h}\in \Omega_1$, i.e., such
  that $|\uple{h}|\leq h$, converges in law to the random singular
  series
$$
Z_Y=Z_{Y,k}=\prod_{p}{\Bigl(1-\frac{1}{p}\Bigr)^{-k}
\Bigl(1-\frac{\rho_p}{p}\Bigr)}
$$
on $\Omega_2$. In other words, there exists a probability law $\nu_k$
on $[0,+\infty[$, which is the law of $Z_Y$, such that $\sing{h}$, for
$|\uple{h}|\leq h$, becomes equidistributed with respect to $\nu_k$, or
equivalently
$$
\lim_{h\ra +\infty}{\frac{1}{h^k}
\sums_{|\uple{h}|\leq h}{f(\sing{h})}
}= \int_{\Rr^+}{f(t)d\nu_k(t)}
$$
for any bounded continuous function on $\Rr$. Moreover we have
\begin{equation}\label{eq-true-moment}
\mu_k(m)=\expect_2(Z_Y^m)=\int_{\Rr^+}{t^md\nu_k(t)}.
\end{equation}
\end{corollary}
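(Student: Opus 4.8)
The plan is to run the method of moments. For each $h\geq 1$, let $\nu_{k,h}$ denote the law of $\sing{h}$ under the normalized counting measure $\proba_1$ on $\Omega_1$; since $\sing{h}\geq 0$, this is a probability measure on $[0,+\infty[$, and Theorem~\ref{pr-gallagher-moments} says precisely that $\int t^m\,d\nu_{k,h}(t)=\expect_1(\sing{h}^m)\to\mu_k(m)$ as $h\to+\infty$ for every integer $m\geq 1$ (and trivially for $m=0$). The steps are: (i) show that $(\mu_k(m))_{m\geq 0}$ is a \emph{determinate} Stieltjes moment sequence, producing a unique probability measure $\nu_k$ on $[0,+\infty[$ with these moments; (ii) upgrade convergence of moments to weak convergence $\nu_{k,h}\to\nu_k$; (iii) identify $\nu_k$ with the law of $Z_Y$, which yields~(\ref{eq-true-moment}).

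The crucial point is determinacy, and this is where Proposition~\ref{pr-growth} is used. From $\log\mu_k(m)=km\log\log 3m+O(m)$ one gets $\mu_k(2m)^{1/(2m)}\ll(\log 6m)^{k}$, hence $\sum_{m\geq 1}\mu_k(2m)^{-1/(2m)}\gg\sum_{m\geq 2}(\log m)^{-k}=+\infty$, so Carleman's condition holds and the Hamburger moment problem for $(\mu_k(m))$ is determinate. Being a pointwise limit of the Stieltjes moment sequences $(\int t^m\,d\nu_{k,h})_{m}$, it is itself a Stieltjes moment sequence, hence admits a representing measure supported on $[0,+\infty[$; by determinacy this measure $\nu_k$ is the unique probability measure on $\Rr$ with $\int t^m\,d\nu_k(t)=\mu_k(m)$ for all $m$.

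Next I would invoke the Fr\'echet--Shohat theorem (the standard form of the method of moments): the bound $\sup_h\int t^{2}\,d\nu_{k,h}<+\infty$ (the second moments converge to $\mu_k(2)$) makes $\{\nu_{k,h}\}_h$ tight, and for any weakly convergent subsequence $\nu_{k,h_j}\to\mu$ the uniform bound $\sup_j\int t^{m+1}\,d\nu_{k,h_j}<+\infty$ yields uniform integrability of $t^{m}$, so $\int t^m\,d\mu=\lim_j\int t^m\,d\nu_{k,h_j}=\mu_k(m)$ for every $m$; by the determinacy just established, $\mu=\nu_k$. Since every subsequential weak limit equals $\nu_k$, we conclude $\nu_{k,h}\to\nu_k$ weakly, i.e.\ $h^{-k}\sums_{|\uple{h}|\leq h}f(\sing{h})\to\int_{\Rr^+}f\,d\nu_k$ for every bounded continuous $f$ on $\Rr$, where~(\ref{eq-compar}) is used to pass from $h_k^*$ to $h^k$.

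Finally, to identify $\nu_k$ with the law of $Z_Y=Z_{Y,k}$ on $\Omega_2$, I would check that $\expect_2(Z_Y^m)=\mu_k(m)$ and apply determinacy once more. Here $1+Y_p=(1-\rho_p/p)(1-1/p)^{-k}\geq 0$, so $Z_Y\geq 0$ and its law is carried by $[0,+\infty[$; expanding $Z_Y^m=\prod_p(1+Y_p(m))=\sumb_q Y_q(m)$ and using~(\ref{eq-expyq}) to get $\sumb_q\expect_2(|Y_q(m)|)<+\infty$, the series converges absolutely almost surely and dominated convergence gives $\expect_2(Z_Y^m)=\sumb_q\expect_2(Y_q(m))=\prod_p(1+\expect_2(Y_p(m)))=\mu_k(m)$ by~(\ref{eq-moment}) --- this is exactly the situation of Remark~\ref{rm-interp}. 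Thus the law of $Z_Y$ has moment sequence $(\mu_k(m))$ and, by determinacy, coincides with $\nu_k$, which is~(\ref{eq-true-moment}) and completes the argument. The only substantial input is Proposition~\ref{pr-growth} feeding Carleman's condition; everything else is routine method-of-moments machinery, the one point deserving care being the uniform-integrability argument needed to push the moments through the weak limit.
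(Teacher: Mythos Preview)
Your proposal is correct and follows essentially the same route as the paper: use Proposition~\ref{pr-growth} to ensure the moment problem is determinate, then apply the method of moments. The only organisational differences are that the paper first establishes $\mu_k(m)=\expect_2(Z_Y^m)$ (via monotone/dominated convergence from~(\ref{eq-integrable}), just as you do at the end) and then applies the method of moments once with target $Z_Y$, rather than producing an abstract $\nu_k$ and identifying it afterwards; and the paper checks determinacy via nonzero radius of convergence of $\sum_m i^m\mu_k(m)t^m/m!$ rather than Carleman's condition, but both follow immediately from the same growth bound.
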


\begin{proof}
  First of all, using~(\ref{eq-moment}), the monotone and dominated
  convergence theorems and~(\ref{eq-integrable}) imply that we have
\begin{equation}\label{eq-1}
\mu_k(m)=\expect_2(Z_{Y}^m)
\end{equation}
for all integers $m\geq 1$. Now a standard result of probability
theory (the ``method of moments'') states that given a positive random
variable $X$ and a sequence of positive random variables $(X_n)$, such
that $\expect(X^m)<+\infty$, $\expect(X_n^m)<+\infty$ for all $n$ and
$m$, the condition
$$
\lim_{n\ra+\infty} \expect(X_n^m)=\expect(X^m)
$$
for all $m\geq 1$ implies the convergence in law of $X_n$ to $X$,
\emph{if} the moments $\expect(X^m)$ do not grow too fast (a
sufficient, but not necessary condition). In fact, it is enough that
the power series
$$
\sum_{m\geq 0}{i^m\frac{\expect(X^m)}{m!}t^m}
$$
have a non-zero radius of convergence, which in our case holds (with
$X=Z_{Y}$) by the almost exponential upper bound for $\mu_k(m)$ in
Proposition~\ref{pr-growth}. Finally, the
formula~(\ref{eq-true-moment}) follows from~(\ref{eq-1}).
\end{proof}

\begin{example}\label{ex-zero}
As a corollary of Proposition~\ref{pr-growth} and symmetry, we have
$$
\log \mu_k(2)=2k\log\log 3k+O(k)
$$
for $k\geq 1$. 
\par
Combined with the classical lower bound for non-vanishing arising from
Cauchy's inequality, it follows that for every fixed $k\geq 1$, we
have
$$
\liminf_{h\ra +\infty}
{
\frac{1}{h^k}|\{
\uple{h}\,\mid\, |\uple{h}|\leq h\text{ and }
\sing{h}\not=0
\}|
}
\geq \frac{\mu_k(1)^2}{\mu_k(2)}\geq \exp(-(2k\log\log 3k+O(k))).
$$
\par
This is close to the truth, as one can check by noting that
we have in fact\footnote{\ This does not follow directly from
  convergence in law for $\sing{h}$, but from the absolute convergence
  and local structure of the singular series.}
$$
\lim_{h\ra +\infty}
{
\frac{1}{h^k}|\{
\uple{h}\,\mid\, |\uple{h}|\leq h\text{ and }
\sing{h}\not=0
\}|
}
=\proba_2(Z_{Y,k}\not=0)
=\prod_{p\leq k}{\proba_2(\rho_p<p)}
$$
using the almost sure absolute convergence of the random Euler product
$Z_{Y,k}$. We have the bounds
$$
\frac{(p-1)^k}{p^k}\leq \proba_2(\rho_p<p)
\leq \frac{p(p-1)^k}{p^k}
$$
(since, for $p\leq k$, a $k$-tuple will have $\rho_p<p$ only if it
omits at least one value in $\Zz/p\Zz$; the lower bound follows by
looking at those omitting $0$, for instance, and the upper one is a
union bound over the possible omitted values), from which we get
$$
-k\log\log 3k+O(k)\leq 
\log \proba_2(Z_{Y,k}\not=0)
\leq k-k\log\log 3k+O(k), 
$$
i.e., we have
$$
\proba_2(Z_{Y,k}\not=0)=\exp(-k\log\log 3k+O(k)).
$$
\par
It follows from this that if we replace the space $\Omega_1$ of all
$k$-tuples with distinct entries by the much smaller one
$$
\tilde{\Omega}_1=\{\uple{h}\in \Omega_1\,\mid\,\sing{h}\not=0\},
$$
(which still depends on $h$, with cardinality $\tilde{h}_k$), the
singular series still has a limiting distribution when interpreted as
a random variable on $\tilde{\Omega}_1$ with $h\ra +\infty$: indeed,
this is the distribution $\tilde{\nu}_k$ given by
$$
\tilde{\nu}_k(A)=\frac{\nu_k(A\cap ]0,+\infty[)}{\nu_k(]0,+\infty[)},
$$
since, for any integer $m\geq 1$, we have
$$
\frac{1}{\tilde{h}_k}\sum_{\uple{h}\in\tilde{\Omega}_1}{\sing{h}^m}=
\frac{h^*_k}{\tilde{h}_k}
\expect_1(\sing{h}^m)
\ra \frac{\mu_k(m)}{\proba_2(Z_{Y,k}\not=0)}
=\int_{[0,+\infty[}{t^md\tilde{\nu}_k(t)},
$$
as $h\ra +\infty$.
\par
Of course, those moments do not satisfy the symmetry property enjoyed
by $\mu_k(m)$.
\end{example}

\begin{remark}
  Before going on to the second part of this paper, the following
  question seems natural: are there arithmetic consequences (possibly
  conditional, similarly to Gallagher's proof
  of~(\ref{eq-gallagher-poisson})) of the existence of $m$-th moments
  of the singular series for $k$-tuples?
\end{remark}

\section{Poisson distribution for general prime patterns}
\label{sec-poisson}

In this section, we prove Theorem~\ref{pr-poisson}, essentially by
following Gallagher's reduction to averages of Euler products, which
turn out to be easily computable after application of
Proposition~\ref{pr-euler}.
\par
We fix a primitive family of polynomials $\uple{f}$ with
$\sing{f}\not=0$ (the reader may want to review the notation in the
introduction for what follows).  To apply Gallagher's method, we also
require some auxiliary families of polynomials, indexed by
$k$-tuples. Thus let $k\geq 1$ be an integer and $\uple{h}$ a
$k$-tuple of integers. For our fixed primitive $\uple{f}$, we denote
$$
\compos{f}{h}=(f_j(X+h_i))_{\stacksum{1\leq j\leq m}{1\leq i\leq k}},
$$
which is a family of $km$ integer polynomials.
\par
Technical difficulties will arise because this family may not be
primitive, even if the components of $\uple{h}$ are distinct (which is
a necessary condition), i.e., we may have an equality
$$
f_{j_1}(X+h_{i_1})=f_{j_2}(X+h_{i_2}),
$$
for some $i_1\not=i_2$, $j_1\not=j_2$. 
\par
For instance, we have $\composn{(X,X+2)}{(3,1)}=(X+3,X+1,X+5,X+3)$ (in
the case of twin primes). However, we will show that these
degeneracies have no effect for the problem at hand. Moreover,
$\compos{f}{h}$ \emph{is} primitive whenever $\uple{h}$ has distinct
arguments, in the following quite general situations:
\par
-- if $m=1$;
\par
-- if the degrees of the $f_j$ are distinct;
\par
-- if no two among the polynomials $f_j$ are related by a translation
$X\mapsto X+\alpha$, for some $\alpha\in\Zz$. 
\par
This means that the reader may well disregard the technical problems
in a first reading (for the twin primes, see also
Example~\ref{ex-poisson} which explains a special reason why the
degeneracies have no consequence then). The following lemma is already
a first step, and we will need it before proving the full statement.

\begin{lemma}\label{lm-paucity-imprim}
Let $\uple{f}$ be a primitive family and $k\geq 1$. Then for any
$h\geq 1$, we have
$$
|\{\uple{h}\,\mid\, |\uple{h}|_k\leq h,\quad
\compos{f}{h}\text{ is not primitive}\}|\ll h^{k-1}
$$
where the implied constant depends only on $k$ and $m$.
\end{lemma}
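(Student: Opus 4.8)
The plan is to pin down exactly how primitivity of $\compos{f}{h}$ can fail. Since translating the variable is a benign operation, the only possible obstruction is that two of the $km$ polynomials $f_j(X+h_i)$ coincide as elements of $\Zz[X]$; so the whole lemma reduces to counting the $k$-tuples $\uple{h}$ that force such a coincidence, and each coincidence turns out to be governed by a single affine relation among the coordinates of $\uple{h}$, whose solution set in $\{1,\dots,h\}^k$ has size $\leq h^{k-1}$.

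In detail, the first step is to record the elementary fact that, for any $a\in\Zz$, the substitution $X\mapsto X+a$ preserves over $\Qq$ the degree, the leading coefficient, irreducibility, and the content of a polynomial in $\Zz[X]$: if some prime $p$ divided every coefficient of $f_j(X+a)$, then applying the inverse substitution modulo $p$ would give $f_j\equiv 0\pmod p$, contradicting primitivity of $\uple{f}$. Hence every entry of $\compos{f}{h}$ is irreducible of degree $\geq 1$ with positive leading coefficient and content $1$, and therefore $\compos{f}{h}$ is not primitive \emph{if and only if} two of its entries are equal, say $f_{j_1}(X+h_{i_1})=f_{j_2}(X+h_{i_2})$ for some $(i_1,j_1)\neq(i_2,j_2)$. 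The second step disposes of the tuples with a repeated entry: there are $\ll_k h^{k-1}$ of these, and they all lie in the exceptional set. For the remaining tuples, whose entries are pairwise distinct, the third step is to note that, since each $f_j$ is non-constant, $f_{j}(X+b)=f_{j}(X+b')$ forces $b=b'$; so a coincidence forces $j_1\neq j_2$ and $i_1\neq i_2$, and is equivalent to $f_{j_2}(X)=f_{j_1}(X+c)$ with $c=h_{i_1}-h_{i_2}$ (necessarily $\neq 0$, as the $f_j$ are distinct), and such a $c$, when it exists, is unique. Thus for each of the at most $k^2m^2$ quadruples $(i_1,i_2,j_1,j_2)$ one gets a fixed linear condition $h_{i_1}-h_{i_2}=c_{j_1,j_2}$, whose solutions with $|\uple{h}|_k\leq h$ number at most $h^{k-1}$; summing over the quadruples gives the bound $\ll_{k,m}h^{k-1}$.

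The argument is almost entirely bookkeeping; the one place requiring a moment's thought — the ``main obstacle'', such as it is — is the opening observation that translating the variable cannot destroy the content (or irreducibility, or the sign of the leading coefficient) of a component. Once that is granted, non-primitivity of $\compos{f}{h}$ can only come from an exact repetition among its members, and the counting of affine solution sets is immediate.
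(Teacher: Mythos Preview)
Your proof is correct and follows essentially the same route as the paper: identify that non-primitivity of $\compos{f}{h}$ can only come from an equality $f_{j_1}(X+h_{i_1})=f_{j_2}(X+h_{i_2})$, translate this into a finite list of affine constraints $h_{i_1}-h_{i_2}=c_{j_1,j_2}$, and count $\leq h^{k-1}$ solutions per constraint. You are in fact slightly more explicit than the paper on two points --- you spell out why translation preserves irreducibility, leading sign, and content (the paper takes this as read from the definition of ``primitive''), and you separately dispose of the tuples with a repeated coordinate, which the paper's proof silently excludes by restricting at the outset to $k$-tuples with distinct components --- but these are cosmetic differences, not a different method.
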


\begin{proof}
  Let $I$ be the set of $k$-tuples $\uple{h}$ with distinct components
  such that $\compos{f}{h}$ is not primitive.  If $\uple{h}\in I$,
  then there exists at least one relation of the type
\begin{equation}\label{eq-rel1}
f_{j_1}(X+h_{i_1})=f_{j_2}(X+h_{i_2}),\quad\quad
i_1\not=i_2,\quad j_1\not=j_2,
\end{equation}
hence
$$
f_{j_1}(X)=f_{j_2}(X+h_{i_2}-h_{i_1}),
$$
so the two polynomials differ by a ``shift''. Let $\mathcal{R}$ be the
set of pairs $(j_1,j_2)$ for which
$$
f_{j_1}(X)=f_{j_2}(X+\delta(j_1,j_2))
$$
for some integer $\delta(j_1,j_2)\not=0$. Because the polynomials
involved are non-constant, this integer is indeed unique.  The
cardinality of $\mathcal{R}$ is bounded in terms of $m$ only, and from
the above, any $k$-tuple $\uple{h}\in I$ must satisfy at least one
relation
$$
h_{i_1}-h_{i_2}=\delta(j_1,j_2),
$$
for some $i_1\not=i_2$ and $(j_1,j_2)\in\mathcal{R}$. Each such
relation is valid for at most $h^{k-1}$ among the $k$-tuples with
$|\uple{h}|\leq h$. 
\end{proof}

We will deduce Theorem~\ref{pr-poisson} from the following
(unconditional) result, which is another instance of average of Euler
products:

\begin{proposition}\label{pr-identity}
Let $\uple{f}=(f_1,\ldots, f_m)$ be a primitive family and $k\geq 1$
an integer.  Then we have
$$
\lim_{h\ra +\infty}{
\frac{1}{h^k}{
\sums_{|\uple{h}|\leq h}
\singn{\compos{f}{h}}
}
}=\sing{f}^k,
$$
where $\sums$ here restricts the summation to those $k$-tuples for
which $\compos{f}{h}$ is primitive.
\end{proposition}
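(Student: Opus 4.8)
The plan is to recognise the left-hand side as an average of an \emph{absolutely convergent} random Euler product and to invoke Proposition~\ref{pr-euler}, following closely the treatment of $\sing{h}^m$ in Section~\ref{sec-moments-singular}. The one new feature is that, unlike in the linear case, $\sing{f}$ and $\singn{\compos{f}{h}}$ are in general only conditionally convergent, so I renormalise first. Write $\sing{f}^k=\prod_p M_p$ with $M_p=(1-\nu_p(\uple{f})/p)^k(1-1/p)^{-km}$, and $\singn{\compos{f}{h}}=\prod_p L_p(\uple{h})$ with $L_p(\uple{h})=(1-\nu_p(\compos{f}{h})/p)(1-1/p)^{-km}$; since $\sing{f}\neq 0$, one has $\singn{\compos{f}{h}}=\sing{f}^k\prod_p W_p(\uple{h})$, $W_p(\uple{h})=L_p(\uple{h})/M_p$, all understood as limits of partial products over $p\leq y$. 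The key point is that this last product is now absolutely convergent for each $\uple{h}$: $\nu_p(\compos{f}{h})$ depends only on $\uple{h}\bmod p$, being the cardinality of $\bigcup_{1\leq i\leq k}(\mathcal{Z}_p-h_i)$ where $\mathcal{Z}_p\subset\Zz/p\Zz$ is the set of roots of $\prod_j f_j$ (so $|\mathcal{Z}_p|=\nu_p(\uple{f})$), and for every prime $p$ not dividing the \emph{nonzero} integer
$$
\Delta_{\uple{f}}(\uple{h})=\prod_{j=1}^{m}\mathrm{lc}(f_j)\cdot\prod_{\substack{1\leq i\neq i'\leq k\\ 1\leq j,j'\leq m}}\res_X\bigl(f_j(X),f_{j'}(X+h_{i'}-h_i)\bigr)
$$
(nonzero precisely because $\compos{f}{h}$ is primitive) the $k$ translates $\mathcal{Z}_p-h_i$ are pairwise disjoint, so $\nu_p(\compos{f}{h})=k\,\nu_p(\uple{f})$ and $W_p(\uple{h})=1+O(p^{-2})$; checking $W_p(\uple{h})-1\ll p^{-1}$ at the finitely many exceptional primes yields the uniform bound $|W_p(\uple{h})-1|\ll(p,\Delta_{\uple{f}}(\uple{h}))\,p^{-2}$, the implied constant depending only on $\uple{f}$ and $k$.

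With this in hand I set up, exactly as in Section~\ref{sec-moments-singular}: $\Omega_1$ the set of $\uple{h}$ with $|\uple{h}|\leq h$ and $\compos{f}{h}$ primitive, with normalised counting measure $\expect_1$ (so $|\Omega_1|=h^k(1+O(h^{-1}))$ by Lemma~\ref{lm-paucity-imprim} and~(\ref{eq-compar})), and $\Omega_2=\prod_p(\Zz/p\Zz)^k$ with the product of uniform measures, $\expect_2$; on $\Omega_1$ put $X_p(\uple{h})=W_p(\uple{h})-1$ and on $\Omega_2$ put $Y_p=w_p(\uple{h}_p)-1$, where $w_p$ is the function on $(\Zz/p\Zz)^k$ with $W_p(\uple{h})=w_p(\uple{h}\bmod p)$, so the $(Y_p)$ are independent. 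The conceptual heart is the local identity $\expect_2(1+Y_p)=\expect_2(w_p)=1$ for every $p$: writing $\tilde{\nu}_p(\uple{b})=|\bigcup_i(\mathcal{Z}_p-b_i)|=\sum_{x\in\Zz/p\Zz}\bigl(1-\prod_i\charfun_{x+b_i\notin\mathcal{Z}_p}\bigr)$ and $w_p(\uple{b})=(1-\tilde{\nu}_p(\uple{b})/p)/(1-\nu_p(\uple{f})/p)^k$, the independence and uniformity of the coordinates $b_i$ give $\expect_2(\tilde{\nu}_p)=p\bigl(1-(1-\nu_p(\uple{f})/p)^k\bigr)$, hence $\expect_2(1-\tilde{\nu}_p/p)=(1-\nu_p(\uple{f})/p)^k$ and $\expect_2(w_p)=1$. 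In words: the local average over $\uple{h}\bmod p$ of the $p$-factor of $\singn{\compos{f}{h}}$ is exactly its value $M_p$ for $\sing{f}^k$ --- the Bateman--Horn analogue of the local identity behind $\mu_k(1)=1$. Consequently $\prod_p(1+\expect_2(Y_p))=1$ and $\expect_2(Y_q)=0$ for all squarefree $q\geq 2$, so the last remainder term in Proposition~\ref{pr-euler} vanishes identically.

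It remains to bound the other two remainder terms. From $|X_q(\uple{h})|\ll B^{\omega(q)}(q,\Delta_{\uple{f}}(\uple{h}))q^{-2}$ and the trivial bound $|\Delta_{\uple{f}}(\uple{h})|\ll h^{D}$ for $|\uple{h}|\leq h$, standard estimates for sums of multiplicative functions give $\sumb_{q>x}|X_q(\uple{h})|\ll_{\eps}h^{\eps}x^{-1}$ uniformly over $\Omega_1$, hence $\expect_1(R_X(x))\ll_{\eps}h^{\eps}x^{-1}$; and since $X_q$, $Y_q$ are the same functions of $\uple{h}\bmod q$ and of $(\uple{h}_p)_{p\mid q}$ respectively, both taking at most $C^{\omega(q)}$ values of size $\ll F^{\omega(q)}/q$, the quantitative equidistribution of $\uple{h}\bmod q$ for $\uple{h}\in\Omega_1$ (lattice-point counting exactly as for~(\ref{eq-proba-nup})), together with $\expect_2(|Y_q|)\ll A^{\omega(q)}q^{-2}$ (valid since for a uniform $\uple{h}_p$ the translates $\mathcal{Z}_p-h_{p,i}$ overlap with probability $\ll p^{-1}$), yields
$$
\bigl|\expect_1(X_q)-\expect_2(Y_q)\bigr|\ll\frac{q}{h}\,\expect_2(|Y_q|)+\frac{G^{\omega(q)}}{h}\ll\frac{E^{\omega(q)}}{h},
$$
so $\sumb_{q\leq x}|\expect_1(X_q)-\expect_2(Y_q)|\ll x(\log 2x)^{O(1)}h^{-1}$. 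Feeding all this into Proposition~\ref{pr-euler} gives $\expect_1\bigl(\prod_p(1+X_p)\bigr)=1+O_{\eps}\bigl(h^{\eps}x^{-1}+x(\log 2x)^{O(1)}h^{-1}\bigr)$, and the choice $x=h^{1/2}$ makes the error $o(1)$; multiplying back by $\sing{f}^k$ and by $|\Omega_1|/h^k=1+O(h^{-1})$ yields the claimed limit $\sing{f}^k$. The genuinely delicate points are the renormalisation with its uniform estimate $W_p(\uple{h})=1+O((p,\Delta_{\uple{f}}(\uple{h}))p^{-2})$, and the control of the imprimitive degeneracies --- the latter being precisely what Lemma~\ref{lm-paucity-imprim} provides; everything else is the template of Section~\ref{sec-moments-singular}.
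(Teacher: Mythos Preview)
Your proposal is correct and follows the same architecture as the paper's proof: renormalise to make the Euler products absolutely convergent, apply Proposition~\ref{pr-euler} with the equidistribution of $\uple{h}\bmod q$ on $\Omega_1$, and compute the local expectation (your identity $\expect_2(w_p)=1$ is exactly the content of Lemma~\ref{lm-end}, proved the same way).

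The one substantive difference is the choice of renormaliser. The paper divides each local factor by
$\gamma_p(\uple{f})=\prod_{j}(1-1/p)^{k(r_j(p)-1)}$,
where $r_j(p)$ is the number of prime ideals of norm $p$ in $\Qq(\theta_j)$, and invokes the prime ideal theorem to show that $\prod_p\gamma_p(\uple{f})$ converges; it then computes the local average separately (Lemma~\ref{lm-end}) and reassembles. You instead divide directly by $M_p$, the $p$-factor of $\sing{f}^k$, so that the target after renormalisation is simply $1$ and the local identity $\expect_2(Y_p)=0$ kills the third error term in Proposition~\ref{pr-euler} outright. Your route is more economical and sidesteps the algebraic-number-theory detour entirely; the paper's choice has the minor advantage that $\gamma_p$ is defined without requiring $\sing{f}\ne 0$, but since $\sing{f}=0$ forces $\singn{\compos{f}{h}}=0$ for every $\uple{h}$, that case is trivial anyway and your assumption is harmless.
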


\begin{remark}
Taking $\uple{f}=(X)$, with $\sing{f}=1$ and
$\compos{f}{h}=(X+h_1,\ldots, X+h_k)$, we recover once
more Gallagher's result~(\ref{eq-gallagher}).
\end{remark}

We have the following complementary statement, which is also
unconditional (recall that, in many cases, it holds for trivial
reasons; it does \emph{not} follow trivially from
Lemma~\ref{lm-paucity-imprim} because although fewer $k$-tuples are
concerned, the number of prime seeds increases when $\compos{f}{h}$ is
not primitive).

\begin{lemma}
\label{lm-imprim}
Let $\uple{f}=(f_1,\ldots, f_m)$ be a primitive family with
$\sing{f}\not=0$, and $k\geq 1$ an integer.  Then for any $N\geq 2$,
if $h\leq \lambda (\log N)^m$ for some $\lambda>0$, and for any
$\eps>0$, we have
$$
\sums_{\stacksum{|\uple{h}|_k\leq h}
{\compos{f}{h}\text{ not primitive}}} 
{
\pi(N;\compos{f}{h})
}\ll \frac{N}{(\log N)^{1-\eps}}
$$
where $\sums$ restricts the sum to those $k$-tuples with distinct
entries, and where the implied constant depends only on $k$,
$\uple{f}$, $\lambda$ and $\eps$.
\end{lemma}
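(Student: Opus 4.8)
The plan is to dominate every term $\pi(N;\compos f h)$ by a Selberg (or Brun) sieve upper bound applied to a primitive subfamily of $\compos f h$, and then to sum the resulting singular series by the very method that proves Proposition~\ref{pr-identity}, after sorting the degenerate $k$-tuples according to \emph{how} degenerate they are.

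First I would invoke Lemma~\ref{lm-paucity-imprim} (and its proof): if $\uple h$ has distinct entries and $\compos f h$ is not primitive, then $\uple h$ satisfies at least one relation $h_i-h_{i'}=\delta$ with $\delta$ in a fixed finite set $\mathcal D=\mathcal D(\uple f)\subset\Zz\setminus\{0\}$. Let $s=s(\uple h)\geq1$ be the rank of the system of \emph{all} such relations satisfied by $\uple h$; then $\uple h$ lies on one of $O_{k,\uple{f}}(1)$ affine subspaces of $\Zz^k$ of codimension $s$, so there are $\ll h^{\,k-s}$ tuples with $|\uple h|\le h$ and $s(\uple h)=s$. Writing $\uple g(\uple h)$ for the primitive family obtained by removing repetitions from $\compos f h$, the standard sieve upper bound gives
$$\pi(N;\compos f h)=\pi(N;\uple g(\uple h))\ \ll\ \singn{\uple g(\uple h)}\,\frac{N}{(\log N)^{|\uple g(\uple h)|}}+O_{\uple{f}}(1),$$
the implied constants depending only on $\uple f$ and $k$ (the $O_{\uple{f}}(1)$ absorbs the case $\singn{\uple g(\uple h)}=0$, where $\nu_p=p$ for some prime $p\le\sum_j\deg f_j$).

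The combinatorial heart of the matter is the bound
$$km-|\uple g(\uple h)|\ \le\ (m-t)\,s(\uple h)\ \le\ (m-1)\,s(\uple h),$$
where $t$ is the number of translation classes among $f_1,\dots,f_m$ (so this also recovers the cases $m=1$, distinct degrees, or no two $f_j$ related by a translation, where $t=m$ and $\compos f h$ is always primitive). To prove it I group the $f_j$ into translation classes; inside a class with shift set $A$, two translates $F(X+h_i+a)$ and $F(X+h_{i'}+a')$ can coincide only if $h_i-h_{i'}\in A-A$, so I split $\{h_i\}$ into the connected components of the graph of these ``active'' pairs, observe that the $A$-translates of two distinct components are disjoint (an overlap would create an active pair between them), and apply $|H'+A|\ge|H'|+|A|-1$ inside each component $H'$; summing over components gives a deficiency $(\text{rank of the active graph})\cdot(|A|-1)$ for that class, and summing over classes, using that each class's active graph has rank $\le s(\uple h)$, yields the claim. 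For the singular-series sum I would note that, after fixing $s$ coordinates via the chosen relations, $\sum\singn{\uple g(\uple h)}$ over the remaining $k-s$ free coordinates is again a constrained average of a (conditionally convergent) Euler product over translates of the $f_j$, so the argument behind Proposition~\ref{pr-identity} — Proposition~\ref{pr-euler} together with the same equidistribution and tail estimates — gives $\sums_{|\uple h|\le h,\ s(\uple h)=s}\singn{\uple g(\uple h)}\ll h^{\,k-s}$, with a constant depending on $\uple f$ and $k$.

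Putting these together and using $h\le\lambda(\log N)^m$, the contribution of the tuples with $s(\uple h)=s$ is
$$\ll\ \frac{N\,h^{\,k-s}}{(\log N)^{\,km-(m-1)s}}+h^{\,k-s}\ \ll\ N(\log N)^{\,m(k-s)-km+(m-1)s}+(\log N)^{m(k-1)}\ \ll\ \frac{N}{(\log N)^{s}}\ \le\ \frac{N}{\log N},$$
and summing over $1\le s\le k-1$ gives the asserted estimate (indeed with an extra power of $\log N$ to spare, which is why the statement is harmlessly phrased with an $\eps$). The only genuinely non-routine step is the coincidence bound $km-|\uple g(\uple h)|\le(m-1)s(\uple h)$: it has to be uniform in $\uple h$ and just strong enough to beat the factor $h^{\,k-s}\le(\log N)^{m(k-s)}$, the extremal case being arithmetic-progression configurations, which is exactly why the sumset inequality $|H'+A|\ge|H'|+|A|-1$ is the right tool. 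The constrained Euler-product estimate, though it takes a few lines, is a verbatim rerun of the computation underlying Proposition~\ref{pr-identity}.
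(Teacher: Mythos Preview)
Your argument is correct, but it takes a different route from the paper's, and in one place it is more elaborate than necessary.

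The paper also sorts the degenerate $\uple{h}$ by a graph on $\{1,\ldots,k\}$ built from the relations $h_{i_1}-h_{i_2}=\delta(j_1,j_2)$, but it fixes the full graph $\Gamma$ and works with the number $c$ of connected components (so $c=k-s$ in your notation) and the number $d\geq 1$ of non-singleton components. The two combinatorial cores differ: the paper proves $|\uple g(\uple h)|\geq cm+d$ by a short permutation/cycle argument (if all $m$ translates by $h_i$ were already among the translates by $h_{i_0}$, one gets a permutation of $\{1,\ldots,m\}$ whose cycles force $h_i=h_{i_0}$), whereas you obtain the sharper bound $|\uple g(\uple h)|\geq km-(m-1)s$ via the sumset inequality $|H'+A|\geq |H'|+|A|-1$ on each translation class. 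Your inequality yields a final exponent $(\log N)^{-s}$, the paper's yields $(\log N)^{-d}$; since $s\geq d\geq 1$, yours is at least as strong and sometimes strictly stronger (e.g.\ a triangle graph on three vertices has $s=2$, $d=1$).

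Where the paper is simpler is in the sieve input: it uses the cruder uniform bound $\pi(N;\compos f h)\ll (\log\log 3|\uple h|)^{km} N/(\log N)^{\ell}$ (essentially absorbing $\singn{\uple g(\uple h)}$ into a $(\log\log)$-power), so no averaging of singular series is required at all; the $\eps$ in the statement is there precisely to swallow that $(\log\log)^{km}$. Your approach keeps the singular series explicit and then averages it, which works but is not quite a ``verbatim rerun'' of Proposition~\ref{pr-identity}: after fixing the relations, $\uple g(\uple h)$ is a union over the free indices $i'$ of \emph{different} families $G_{i'}$ each shifted by the single variable $h'_{i'}$, rather than a single family shifted by every coordinate of $\uple h'$. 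The method (Proposition~\ref{pr-euler} plus equidistribution of $\uple h'$ mod $q$) does adapt, but you should flag that this is a mild generalisation, not a literal application. Alternatively you can bypass this step entirely by bounding $\singn{\uple g(\uple h)}\ll (\log\log 3h)^{km}$ pointwise, as the paper effectively does, and still keep your sumset-based combinatorial bound.
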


Here is the proof of the (conditional) Poisson distribution, assuming
those two results.

\begin{proof}[Proof of Theorem~\ref{pr-poisson}]
The argument is essentially identical with that of Gallagher, but we
reproduce it for completeness, and so that the necessary uniformity in
the Bateman-Horn conjecture becomes clear.
\par
Because the Poisson distribution is characterized by its moments, it
is enough to prove that for any fixed integer $k\geq 1$, we have
$$
\frac{1}{N}\sum_{n\leq N}{
\Bigl(
\pi(n+\lambda\delta(N,\uple{f});\uple{f})-\pi(n;\uple{f})
\Bigr)^k
}\ra \expect(P_{\lambda}^k),\quad\quad
\text{as }N\ra+\infty,
$$
where $P_{\lambda}$ is any Poisson random variable with mean
$\lambda$. 
\par
Write $h=\lambda\delta(N,\uple{f})$. Expanding the left-hand side, we
obtain
$$
\frac{1}{N}\sum_{n\leq N}
{
\Bigl(
\multsum_{\stacksum{n<m_i\leq n+h}{m_i\text{ $\uple{f}$-prime seed}}}
{1}
\Bigr)
}
$$
where there are $k$ sums over $m_1$, \ldots, $m_k$. Write $m_i=n+h_i$,
so that $1\leq h_i\leq h$, and the condition becomes that $f_j(n+h_i)$
is prime for all $i$ and $j$, i.e., that $n$ be an
$\compos{f}{h}$-prime seed. Exchanging the order of
summation, we get
$$
\frac{1}{N}\sum_{|\uple{h}|_k\leq h}
{\pi(N;\compos{f}{h})}.
$$
\par
Before applying~(\ref{eq-bh-unif}), we need to account for the
$k$-uples which do not necessarily have distinct components, and for
those where $\compos{f}{h}$ is \emph{not} primitive.
\par
For this, observe first that $\pi(N;\compos{f}{h})$ only depends on
the set containing the components of the $k$-tuple $\uple{h}$. This
justifies the fact that the reorderings that follow are permissible.
For each $r$, $1\leq r\leq k$, and each $r$-tuple $\uple{h}'$ with
distinct components, the set of those $k$-tuples for which the set of
values is given by the set of components of $\uple{h}'$ has
cardinality depending only on $r$ and $k$, but independent of
$\uple{h}'$, and in fact it is given by $\stirling{k}{r}$ (one can
assume that $\uple{h}'=(1,\ldots, r)$, and obtain a bijection
$$
\left\{
\begin{array}{ccl}
\{\text{suitable $k$-tuples}\}&\ra& 
\{\text{surjective maps } \{1,\ldots, k\}\ra
\{1,\ldots, r\}\}\\
\uple{h}&\mapsto& (f\,:\, i\mapsto h_i)
\end{array}
\right.
$$
between the two sets).
\par
Then we can write
$$
\frac{1}{N}\sum_{|\uple{h}|_k\leq h}
{\pi(N;\compos{f}{h})}=
\frac{1}{N}\sum_{r=1}^k{
\frac{1}{r!}\stirling{k}{r}
\sums_{|\uple{h}'|_r\leq h}{
\pi(N;\compos{f}{h'})
}}
$$
where we divide by $r!$ because we sum over all $r$-tuples instead of
only ordered ones, and $\sums$ restricts to $r$-tuples with distinct
entries.
\par
Now, for each $r$, we separate the sum over $r$-tuples for which
$\compos{f}{h'}$ is primitive from the other
subsum. Applying~(\ref{eq-bh-unif}) and using the easy bound
$$
c(\compos{f}{h'})\ll c(\uple{f})|\uple{h'}|_r^{\max \deg(f_j)},
$$
(where the implied constant depends on $r$ and $\uple{f}$) the first
sum (still denoted $\sums$) is equal to
$$
\sum_{r=1}^k{ \frac{1}{r!}\stirling{k}{r}
  \frac{1}{\peg(\uple{f})^r}\frac{1}{(\log N)^{rm}}
  \sums_{|\uple{h}'|_r\leq h}{ \singn{\compos{f}{h'}}
\Bigl(1+O\Bigl(
\frac{h^{\eps}}{\log N}
\Bigr)\Bigr)
 } },
$$
for any $\eps>0$, where the implied constant depends on $\uple{f}$,
$k$ and $\eps$. Using Proposition~\ref{pr-identity} and the choice of
$h=\lambda \peg(\uple{f})\sing{f}^{-1}(\log N)^m$, this converges as
$N\ra +\infty$ to the limit
$$
\sum_{r=1}^k{ \frac{\lambda^r}{r!}\stirling{k}{r}},
$$
which is well-known to be the $k$-th moment of a Poisson distribution
with mean $\lambda$ (this is checked by Gallagher for instance,
see~\cite[\S 3]{gallagher}). Hence, to conclude the proof, we need
only notice that Lemma~\ref{lm-imprim} (applied with $k=r$ for $1\leq
r\leq k$) implies (taking $\eps=1/2$ for concreteness) that the
complementary sum is bounded by
$$
\frac{1}{N}\sum_{r=1}^k{ \frac{1}{r!}\stirling{k}{r}
  \sum_{\stacksum{|\uple{h}'|_r\leq h} {\compos{f}{h'}\text{ not
        primitive}}}{ \pi(N;\compos{f}{h'}) }}\ll (\log N)^{-1/2}
$$
for $N\geq 2$, where the implied constant depends on $k$, $\uple{f}$
and $\lambda$. Hence this second contribution goes to $0$ as $N\ra
+\infty$, as desired.
\end{proof}

We now prove Proposition~\ref{pr-identity}. This is the conjunction of
the two following lemmas, where we use the same notation as in
Section~\ref{sec-moments-singular}, but change a bit the definition of
probability spaces. Precisely,
$$
\Omega_2=\prod_p{(\Zz/p\Zz)^k}
$$
is unchanged, but we let
$$
\Omega_1=\{\uple{h}=(h_1,\ldots, h_k)\,\mid\, 
1\leq h_i\leq h,\quad \compos{f}{h}\text{ is primitive}
\}
$$
with the counting probability measure (note that the condition forces
$\uple{h}$ to have distinct coordinates). By
Lemma~\ref{lm-paucity-imprim}, note that we have
\begin{equation}\label{eq-cons-imprim}
|\Omega_1|\sim h^{k}\quad\text{ as } h\ra +\infty.
\end{equation}
\par
The next lemma shows that the average of Euler product involved can be
computed as if the components where independent:

\begin{lemma}\label{lm-apply}
  Let $\sing{f}=(f_1,\ldots, f_m)$ be a primitive family with
  $\sing{f}\not=0$. Then for any $k\geq 1$, we have
\begin{align*}
\lim_{h\ra +\infty}{
\frac{1}{h^k}{
\sum_{|\uple{h}|\leq h}
\singn{\compos{f}{h}}
}
}&
=
\lim_{h\ra +\infty}{
\expect_1\Bigl(\prod_p{
\Bigl(1-\frac{1}{p}\Bigr)^{-km}
\Bigl(1-
\frac{\nu_{p,\uple{f}}}{p}
\Bigr)
}\Bigr)}\\
&=\prod_{p}{\expect_2\Bigl(
\Bigl(1-\frac{1}{p}\Bigr)^{-km}
\Bigl(1-\frac{\rho_{p,\uple{f}}}{p}\Bigr)\Bigr)},
\end{align*}
where
\begin{gather*}
\nu_{p,\uple{f}}(\uple{h})=\nu_p(\compos{f}{h})
\text{ for } \uple{h}=(h_1,\ldots, h_k)\text{ with }
h_i\geq 1,
\\
\rho_{p,\uple{f}}(\uple{h})=
|\{
x\in \Zz/p\Zz\,\mid\, f_j(x+h_i)=0\text{ for some $i$, $j$}
\}|
\text{ for } \uple{h}\in (\Zz/p\Zz)^r.
\end{gather*}
\end{lemma}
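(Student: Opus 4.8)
The plan is to reduce the statement to an application of Proposition~\ref{pr-euler}, exactly as in the proof of Theorem~\ref{pr-gallagher-moments}, but now with the random variables coming from the family $\compos{f}{h}$. First I would record the factorization
$$
\singn{\compos{f}{h}}
=\prod_p\Bigl(1-\frac{1}{p}\Bigr)^{-km}\Bigl(1-\frac{\nu_{p,\uple{f}}(\uple{h})}{p}\Bigr)
=\prod_p(1+X_p(\uple{h})),
$$
where $X_p(\uple{h})=\bigl(1-\tfrac1p\bigr)^{-km}\bigl(1-\tfrac{\nu_{p,\uple{f}}(\uple{h})}{p}\bigr)-1$, and correspondingly set $Y_p=\bigl(1-\tfrac1p\bigr)^{-km}\bigl(1-\tfrac{\rho_{p,\uple{f}}}{p}\bigr)-1$ on $\Omega_2$, which is an independent sequence because the blocks $\uple{h}_p\in(\Zz/p\Zz)^k$ are independent. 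The three inputs needed for Proposition~\ref{pr-euler} are then: (i) a tail bound $\sumb_{q>x}|X_q(\uple{h})|\le R_X(x)$ with $\expect_1(R_X(x))$ small, uniformly in $h$; (ii) absolute convergence of $\prod_p(1+|\expect_2(Y_p)|)$, equivalently $\sumb_q|\expect_2(Y_q)|<+\infty$; (iii) a quantitative equidistribution statement comparing $\expect_1(X_q)$ with $\expect_2(Y_q)$.

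For (i), the key point is that $\nu_{p,\uple{f}}$ equals its ``generic'' value for all but finitely many bad $p$ — precisely, $\nu_p(\compos{f}{h})$ differs from the sum $\sum_j\deg f_j$ only when $p$ divides the relevant discriminant/resultant quantity built from $\compos{f}{h}$, whose size is polynomially bounded in $h$ (this is the analogue of $|\Delta|\le (2h)^{k^2}$ in the $k$-tuple case, now with $\compos{f}{h}$ in place of the linear family). So $X_p(\uple{h})\ll (p,\mathfrak{d})p^{-2}$ where $\mathfrak{d}=\mathfrak{d}(\uple{h})$ is the product of those bad primes weighted suitably, and the standard multiplicative-function estimate gives $\sumb_{q>x}|X_q(\uple{h})|\ll x^{-1}(\log 2hx)^D$; averaging over $\Omega_1$ and using~(\ref{eq-cons-imprim}) keeps this uniform in $h$ after the choice $x=h^{1/2}$. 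For (ii), one uses that $\proba_2(\rho_{p,\uple{f}}\ne \deg$-sum$)\ll p^{-1}$ — because the event that two of the roots of the $f_j(X+h_i)$ collide modulo $p$, or that a single $f_j$ has fewer roots than its degree mod $p$, is a congruence condition on a non-zero polynomial in the $h_i$ — so $\expect_2(|Y_p|)\ll p^{-2}$ and hence $\expect_2(|Y_q|)\le A^{\omega(q)}q^{-2}$. Here one must be a little careful that $\sing{f}\ne 0$ is used only to ensure the limiting product is nonzero; the convergence itself is automatic since, although $\singn{\compos{f}{h}}$ is not absolutely convergent as an Euler product for nonlinear $\uple{f}$, the $1+\expect_2(Y_p)$ factors are $1+O(p^{-2})$.

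For (iii), the argument is the same as for~(\ref{eq-equidist}): given target values $s_p$ of $\nu_{p,\uple{f}}$ for $p\mid q$, the set of $\uple{h}\pmod{q}$ realizing them is a union of residue classes mod $q$, and lattice-point counting over $|\uple{h}|\le h$ with $\uple{h}\equiv\uple{h}_p\pmod{p\mid q}$ gives $\tfrac{h^k}{q^k}(1+O(q/h))$ up to an $O(h^{k-1})$ error from dropping distinctness/primitivity of $\compos{f}{h}$ — the latter controlled by Lemma~\ref{lm-paucity-imprim} and~(\ref{eq-cons-imprim}). Since $X_q$ and $Y_q$ are the \emph{same} function of $(\nu_{p,\uple{f}})_{p\mid q}$ resp.\ $(\rho_{p,\uple{f}})_{p\mid q}$, summing over the at most $F^{\omega(q)}$ relevant tuples yields $|\expect_1(X_q)-\expect_2(Y_q)|\ll E^{\omega(q)}/h$. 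Feeding this into Proposition~\ref{pr-euler} with $x=h^{1/2}$ gives the first equality in Lemma~\ref{lm-apply}; the second equality (the product of local averages) is then a restatement of the conclusion~(\ref{eq-converge}) of the proposition, together with Remark~\ref{rm-interp} applied to the nonnegative variables $1+Y_p$. The main obstacle I anticipate is not any single estimate but the bookkeeping around $\compos{f}{h}$: one has to verify that ``$\nu_{p,\uple{f}}$ is generic off a polynomially-bounded bad set'' genuinely holds for a \emph{primitive} family (using irreducibility of each $f_j$ and distinctness of the shifted polynomials, which is where the ``not primitive'' exceptions are quarantined via Lemma~\ref{lm-paucity-imprim}), and that the finitely many values $X_q,Y_q$ take, and the bad-prime weights, are all bounded appropriately in $h$ — everything else then runs on autopilot from the $k$-tuple case.
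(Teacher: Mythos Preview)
There is a genuine gap in your argument, and it lies exactly where the paper has to do extra work: the absolute convergence of the individual Euler products. Your step~(i) rests on the claim that ``$\nu_p(\compos{f}{h})$ differs from the sum $\sum_j\deg f_j$ only when $p$ divides the relevant discriminant/resultant quantity''. This is false unless every $f_j$ is linear. For an irreducible $f_j$ of degree $>1$, the number of roots of $f_j$ modulo $p$ is $r_j(p)$, the number of degree-one primes above $p$ in $\Qq(\theta_j)$, and this fluctuates with $p$; it equals $\deg f_j$ only at the density-zero set of totally split primes. Consequently, at a typical prime $p$ (coprime to all resultants) one has $\nu_{p,\uple{f}}(\uple{h})=k\sum_j r_j(p)$, and hence
\[
X_p(\uple{h})=\Bigl(1-\frac{1}{p}\Bigr)^{-km}\Bigl(1-\frac{k\sum_j r_j(p)}{p}\Bigr)-1
=\frac{k(m-\sum_j r_j(p))}{p}+O(p^{-2}),
\]
which is $\asymp p^{-1}$ for a positive proportion of primes. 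The tail bound $\sumb_{q>x}|X_q|\ll x^{-1}(\log 2hx)^D$ therefore fails, and so does the corresponding bound $\expect_2(|Y_p|)\ll p^{-2}$ in your step~(ii): by Lemma~\ref{lm-end}, $1+\expect_2(Y_p)$ is the $p$-factor of $\sing{f}^k$, which is \emph{not} $1+O(p^{-2})$ for nonlinear $\uple{f}$.

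The paper confronts this head-on by introducing a \emph{uniform} (i.e., $\uple{h}$-independent) renormalization: one writes
\[
\Bigl(1-\frac{1}{p}\Bigr)^{-km}\Bigl(1-\frac{\rho_{p,\uple{f}}}{p}\Bigr)
=\gamma_p(\uple{f})(1+X_{p,\uple{f}}),\qquad
\gamma_p(\uple{f})=\prod_{j=1}^m\Bigl(1-\frac{1}{p}\Bigr)^{k(r_j(p)-1)},
\]
so that the renormalized $X_{p,\uple{f}}$ genuinely satisfies $X_{p,\uple{f}}\ll p^{-2}$ at primes not dividing $D(\uple{h})$, while $\prod_p\gamma_p(\uple{f})$ converges (by the prime number theorem in $K_j$, which gives $\sum_{p\le X}r_j(p)/p=\sum_{p\le X}1/p+c+o(1)$). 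One then runs your steps (i)--(iii) on the renormalized products, and multiplies back by $\prod_p\gamma_p(\uple{f})$ at the end. Your outline is otherwise correct and matches the paper's strategy; it is only this renormalization device that is missing, and without it the proof does not go through for $\peg(\uple{f})>1$.
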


The second lemma computes the limit locally:

\begin{lemma}\label{lm-end}
Let $\uple{f}=(f_1,\ldots, f_m)$ be a primitive family. Then for any
$k\geq 1$ and any prime $p$, we have
$$
\expect_2\Bigl(
\Bigl(1-\frac{1}{p}\Bigr)^{-km}
\Bigl(1-\frac{\rho_{p,\uple{f}}}{p}\Bigr)\Bigr)=
\Bigl(1-\frac{1}{p}\Bigr)^{-km}
\Bigl(1-\frac{\nu_p(\uple{f})}{p}\Bigr)^k.
$$
\end{lemma}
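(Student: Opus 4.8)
The plan is to note that the factor $(1-1/p)^{-km}$ is deterministic and pulls out of the expectation, so that Lemma~\ref{lm-end} is equivalent to the purely combinatorial identity
$$
\expect_2\Bigl(1-\frac{\rho_{p,\uple{f}}}{p}\Bigr)=\Bigl(1-\frac{\nu_p(\uple{f})}{p}\Bigr)^k .
$$
Since $\rho_{p,\uple{f}}$ depends only on the component $\uple{h}_p=(h_1,\ldots,h_k)\in(\Zz/p\Zz)^k$, the expectation $\expect_2$ is simply an average over $(\Zz/p\Zz)^k$ for the normalized counting measure. First I would introduce the set $V=\{y\in\Zz/p\Zz\,\mid\,f_j(y)=0\text{ for some }j\}$, so that $|V|=\nu_p(\uple{f})$ by definition of $\nu_p(\uple{f})$, and observe that $f_j(x+h_i)\equiv 0\mods p$ for some $j$ exactly when $x+h_i\in V$. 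Hence $x$ is counted by $\rho_{p,\uple{f}}(\uple{h})$ iff $x+h_i\in V$ for some $i$, and passing to the complement,
$$
1-\frac{\rho_{p,\uple{f}}(\uple{h})}{p}=\frac{1}{p}\bigl|\{x\in\Zz/p\Zz\,\mid\,x+h_i\notin V\text{ for all }i\}\bigr| .
$$

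Next I would average this over $\uple{h}\in(\Zz/p\Zz)^k$ and exchange the order of the two finite sums, which gives
$$
\expect_2\Bigl(1-\frac{\rho_{p,\uple{f}}}{p}\Bigr)=\frac{1}{p^{k+1}}\sum_{x\in\Zz/p\Zz}\bigl|\{\uple{h}\in(\Zz/p\Zz)^k\,\mid\,x+h_i\notin V\text{ for all }i\}\bigr| .
$$
For a fixed $x$, the conditions on $h_1,\ldots,h_k$ are independent of one another: each $h_i$ must avoid the translate $V-x$, which has $p-\nu_p(\uple{f})$ residues in its complement, so the inner cardinality equals $(p-\nu_p(\uple{f}))^k$, independently of $x$. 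Summing over the $p$ values of $x$ and dividing by $p^{k+1}$ yields $(1-\nu_p(\uple{f})/p)^k$, and multiplying back by $(1-1/p)^{-km}$ gives the stated formula.

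The argument is entirely elementary, so I do not anticipate a genuine obstacle; the only mild subtlety is that the translated zero sets $V-h_i$ may overlap, which would make a direct evaluation of $\rho_{p,\uple{f}}$ via inclusion--exclusion unpleasant. This is sidestepped by working with the complementary count $p-\rho_{p,\uple{f}}$ and applying Fubini before unpacking the conditions, after which the coordinates $h_i$ decouple cleanly and no overlap bookkeeping is needed.
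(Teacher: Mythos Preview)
Your proof is correct and follows essentially the same route as the paper: both pull out the deterministic factor, rewrite $p-\rho_{p,\uple{f}}$ as the count of $x\in\Zz/p\Zz$ with $x+h_i\notin V$ for all $i$, exchange the sum over $x$ with the average over $\uple{h}$, and then use that the coordinates $h_1,\ldots,h_k$ decouple. The only difference is cosmetic---the paper phrases the decoupling in terms of independence of the random variables $\xi_{\uple{f},i}(x)=\prod_j(1-\charfun_{\{f_j(x+h_i)=0\}})$, while you phrase it as a direct count of $\uple{h}$ avoiding $V-x$ coordinatewise.
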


Looking at the definition~(\ref{eq-singf}) of $\sing{f}$, both lemmas
together prove Proposition~\ref{pr-identity}.  We start by proving
Lemma~\ref{lm-end} because Lemma~\ref{lm-apply} is certainly plausible
enough in view of Section~\ref{sec-moments-singular}, and the reader
may be more interested by the final formal flourish.

\begin{proof}[Proof of Lemma~\ref{lm-end}]
It suffices to compute
$$
\expect_2\Bigl(
1-\frac{\rho_{p,\uple{f}}}{p}\Bigr)
$$
since the other factor is the same on both sides. We argue
probabilistically, although one can also just expand the various sums
(and do the same steps in a different language, as we did when proving
the symmetry~(\ref{eq-symmetric})). We can write
$$
1-\frac{\rho_{p,\uple{f}}}{p}=\frac{1}{p}|\Zz/p\Zz-M|
$$
where $M\subset \Zz/p\Zz$ is the (random) subset of those
$x\in\Zz/p\Zz$ such that $f_j(x+h_i)=0$ for some $i$ and $j$. We write
$$
|\Zz/p\Zz-M|=\sum_{x\in\Zz/p\Zz}{(1-\chi_M(x))}
$$
where $\chi_M(x)$ is the random variable equal to one if $x\in M$ and
zero otherwise. We have
$$
1-\chi_M(x)=\prod_{1\leq i\leq k}\prod_{1\leq j\leq m}{
(1-\charfun_{\{f_j(x+h_i)=0\}})
}=\prod_{1\leq i\leq k}{\xi_{\uple{f},i}(x)},
$$
say. Since $\xi_{\uple{f},i}(x)$ only involves the $i$-th component of the
random $\uple{h}\in\Omega_2$, the family $(\xi_{\uple{f},i}(x))$ is an
independent $k$-tuple of random variables. Consequently we derive
\begin{align*}
\expect_2\Bigl(
1-\frac{\rho_{p,\uple{f}}}{p}\Bigr)&=
\frac{1}{p}\sum_{x\in \Zz/p\Zz}{\expect_2\Bigl(\prod_{1\leq i\leq k}
{\xi_{\uple{f},i}(x)}\Bigr)}\\
&=\frac{1}{p}\sum_{x\in\Zz/p\Zz}
{\prod_{1\leq i\leq k}{\expect_2(\xi_{\uple{f},i}(x))}}.
\end{align*}
\par
To conclude we notice that for every $x$ and $i$, $\uple{h}\mapsto
x+h_i$ is identically (uniformly) distributed, so that all
$\xi_{\uple{f},i}(x)$ are identically distributed like
$$
\xi_{\uple{f}}=\xi_{\uple{f},1}(0)
=\prod_{1\leq j\leq m}{(1-\charfun_{\{f_j(h_1)=0\}})}.
$$
\par
Hence all $x$ give the same contribution, and we derive that
$$
\expect_2\Bigl(
1-\frac{\rho_{p,\uple{f}}}{p}\Bigr)
=\expect_2(\xi_{\uple{f}})^k
=\proba_2(f_1(h_1)\cdots f_m(h_1)\not=0)^k=
\Bigl(1-\frac{\nu_{p}(\uple{f})}{p}\Bigr)^k,
$$
since $h_1$ is uniformly distributed in $\Zz/p\Zz$.
\end{proof}

To prove Lemma~\ref{lm-apply}, we wish to apply
Proposition~\ref{pr-euler}. A complication is that, if
$\peg(\uple{f})\not=1$, the singular series $\singn{\compos{f}{h}}$
are not defined by absolutely convergent products, and therefore the
result is not directly applicable. However, we can bypass this
difficulty here without significant work because of the following
fact: all the relevant Euler products can be \emph{uniformly}
``renormalized'' to absolutely convergent ones. This is the content of
the next lemma.

\begin{lemma}
  Let $\uple{f}$ be a primitive family with $\sing{f}\not=0$, and let
  $k\geq 1$ be an integer. There exist real numbers
  $\gamma_p(\uple{f})> 0$, for all primes $p$, such that the product
$$
\prod_p{\gamma_p(\uple{f})}
$$ 
converges, and such that the following hold:
\par
\emph{(1)} For all prime $p$, and all $k$-tuple $\uple{h}\in
(\Zz/p\Zz)^k$, we have
$$
\Bigl(1-\frac{1}{p}\Bigr)^{-km}
\Bigl(1-\frac{\rho_{p,\uple{f}}(\uple{h})}{p}\Bigr)
=\gamma_p(\uple{f})
\times (1+X_{p,\uple{f}}(\uple{h}))
$$
for some coefficients $X_{p,\uple{f}}(\uple{h})$, and for all
$k$-tuple of integers $\uple{h}$ such that $\compos{f}{h}$ is
primitive, the product
\begin{equation}\label{eq-renorm}
\prod_{p}{(1+X_{p,\uple{f}}(\uple{h}))}
\end{equation}
is absolutely convergent.
\par
\emph{(2)} We have
$$
\lim_{h\ra +\infty}{\frac{1}{h^k}
\sums_{|\uple{h}|_k\leq h}{
\prod_p{(1+X_{p,\uple{f}}(\uple{h}))}
}}
=\prod_{p}{(1+\expect_2(
X_{p,\uple{f}}))},
$$
where the sum is over $k$-tuples with $\compos{f}{h}$ primitive.
\end{lemma}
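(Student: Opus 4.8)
The plan is to re-run, essentially verbatim, the argument of Section~\ref{sec-moments-singular}, the one new ingredient being a substitute for the discriminant $\Delta(\uple{h})$ that there governs the primes at which the local behaviour is atypical. For the first assertion I would take
\[
\gamma_p(\uple{f})=\bigl((1-1/p)^{-m}(1-\nu_p(\uple{f})/p)\bigr)^{k},
\]
the $k$-th power of the $p$-factor of $\sing{f}$: since $\sing{f}\neq 0$ each such $p$-factor is a strictly positive real, so $\gamma_p(\uple{f})>0$, and $\prod_{p\leq y}\gamma_p(\uple{f})\to\sing{f}^{k}$, so $\prod_p\gamma_p(\uple{f})$ converges. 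One is then forced to put
\[
1+X_{p,\uple{f}}(\uple{h})=\frac{1-\rho_{p,\uple{f}}(\uple{h})/p}{(1-\nu_p(\uple{f})/p)^{k}} .
\]

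To see that this product converges absolutely whenever $\compos{f}{h}$ is primitive, write $g=\prod_{1\leq j\leq m}f_j$ and $V_p=\{x\in\Zz/p\Zz\mid g(x)=0\}$, so that $|V_p|=\nu_p(\uple{f})$ and $\rho_{p,\uple{f}}(\uple{h})=\bigl|\bigcup_{i=1}^{k}(V_p-h_i)\bigr|$. Two of the translates $V_p-h_i$, $V_p-h_{i'}$ meet exactly when $p$ divides $\res(g(X),g(X-(h_i-h_{i'})))$, and — by the same computation as in Lemma~\ref{lm-paucity-imprim} — primitivity of $\compos{f}{h}$ forces $g(X)$ and $g(X-(h_i-h_{i'}))$ to be coprime in $\Qq[X]$ for $i\neq i'$, so this resultant is a nonzero integer. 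Hence, with $\mathfrak{R}(\uple{h})=\prod_{i<i'}\res(g(X),g(X-(h_i-h_{i'})))\in\Zz\setminus\{0\}$ (an integer of size $\ll h^{O(1)}$), for every $p\nmid\mathfrak{R}(\uple{h})$ the union is disjoint, $\rho_{p,\uple{f}}(\uple{h})=k\nu_p(\uple{f})$, and $(1-k\nu_p(\uple{f})/p)(1-\nu_p(\uple{f})/p)^{-k}=1+O(1/p^{2})$ (using $\nu_p(\uple{f})\leq\deg g$), so $X_{p,\uple{f}}(\uple{h})\ll 1/p^{2}$; for the finitely many remaining $p$ one has only $X_{p,\uple{f}}(\uple{h})\ll 1/p$, and the finitely many $p\leq\deg g$ give anyway bounded factors. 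Thus $|X_{p,\uple{f}}(\uple{h})|\ll(p,\mathfrak{R}(\uple{h}))\,p^{-2}$ outside a finite set depending only on $\uple{f}$, so $\sum_p|X_{p,\uple{f}}(\uple{h})|<+\infty$ and the product~(\ref{eq-renorm}) converges absolutely.

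For the second assertion I would apply Proposition~\ref{pr-euler} with $\Omega_1$ the set of $\uple{h}$, $|\uple{h}|\leq h$, such that $\compos{f}{h}$ is primitive (normalised counting measure; $|\Omega_1|\sim h^{k}$ by Lemma~\ref{lm-paucity-imprim}) and $\Omega_2=\prod_p(\Zz/p\Zz)^{k}$, with $Y_p=X_{p,\uple{f}}$ regarded on $\Omega_2$ through $\rho_{p,\uple{f}}$ — an independent family, since $X_{p,\uple{f}}$ depends only on the $p$-th coordinate. Three inputs are needed. First, a tail bound $\sumb_{q>x}{|X_q|}\leq R(x)$ with $\expect_1(R(x))\ll x^{-1}(\log 2hx)^{D}$: this is a standard sum of multiplicative functions from $|X_{p,\uple{f}}(\uple{h})|\ll(p,\mathfrak{R}(\uple{h}))p^{-2}$, with $\mathfrak{R}(\uple{h})\ll h^{O(1)}$ in exactly the role $\Delta(\uple{h})$ plays at~(\ref{eq-discrim}). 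Second, $\expect_2(|Y_p|)\ll p^{-2}$, so that~(\ref{eq-assump}) holds and $\sumb_{q>x}{|\expect_2(Y_q)|}\ll x^{-1}$: here one computes, by independence and uniformity of each $x+h_i$, that $\expect_2(\rho_{p,\uple{f}})=\sum_{x\in\Zz/p\Zz}\bigl(1-\prod_i\proba_2(x+h_i\notin V_p)\bigr)=p\bigl(1-(1-\nu_p(\uple{f})/p)^{k}\bigr)$, whence $k\nu_p(\uple{f})-\expect_2(\rho_{p,\uple{f}})\ll 1/p$ and $\expect_2(|X_{p,\uple{f}}|)\ll p^{-2}$. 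Third, quantitative equidistribution $|\expect_1(X_q)-\expect_2(Y_q)|\ll E^{\omega(q)}/h$ for squarefree $q$: since $X_q$ is a function of $\uple{h}\bmod q$, this is lattice-point counting in $\Omega_1$ over each residue class modulo $q$, just as at~(\ref{eq-equidist})--(\ref{eq-proba-nup}), the cost of dropping the conditions ``$h_i$ distinct'' and ``$\compos{f}{h}$ primitive'' being absorbed in the $O(1/h)$ term via Lemma~\ref{lm-paucity-imprim}. Summing the third estimate over $q\leq x$ and choosing $x=h^{1/2}$, Proposition~\ref{pr-euler} yields $\expect_1\bigl(\prod_p(1+X_{p,\uple{f}})\bigr)\to\prod_p(1+\expect_2(X_{p,\uple{f}}))$ as $h\to+\infty$; multiplying by $|\Omega_1|/h^{k}\to 1$ gives the second assertion.

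The crux is the first assertion: isolating $\mathfrak{R}(\uple{h})$ and establishing that it is a nonzero integer of polynomial size — equivalently, that all primes at which $\rho_{p,\uple{f}}(\uple{h})\neq k\nu_p(\uple{f})$ divide one fixed integer $\ll h^{O(1)}$. This is precisely where primitivity of $\compos{f}{h}$ enters (a translation relation $f_{j_1}(X+h_{i_1})=f_{j_2}(X+h_{i_2})$ is exactly what makes a factor of $\mathfrak{R}(\uple{h})$ vanish), and it is the same mechanism as in Lemma~\ref{lm-paucity-imprim}; carrying the primitivity restriction through the lattice count of the third input, without disturbing the congruence counts modulo $q$, is a lesser point settled by the same lemma. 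I would also remark that $\expect_2(X_{p,\uple{f}})=0$ for every $p$ — immediate from the formula for $\expect_2(\rho_{p,\uple{f}})$ above — so the limit in the second assertion equals $1$; together with $\prod_p\gamma_p(\uple{f})=\sing{f}^{k}$ this recovers Proposition~\ref{pr-identity}, although that observation is not required for the lemma itself.
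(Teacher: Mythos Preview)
Your approach is correct and in fact more elementary than the paper's. The difference lies in the choice of renormalizing factors $\gamma_p(\uple{f})$. The paper takes $\gamma_p(\uple{f})=\prod_{j}(1-1/p)^{k(r_j(p)-1)}$, where $r_j(p)$ is the number of prime ideals of norm $p$ in $\Qq(\theta_j)$ with $\theta_j$ a root of $f_j$; convergence of $\prod_p\gamma_p$ then rests on the prime-ideal-theorem asymptotic $\sum_{p\leq X}r_j(p)/p=\sum_{p\leq X}1/p+c(K_j)+O((\log X)^{-1})$, and absolute convergence of~(\ref{eq-renorm}) requires in addition the Dedekind-type identity $\nu_p(f_j)=r_j(p)$ away from a fixed discriminantal set of primes. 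Your choice $\gamma_p(\uple{f})=\bigl((1-1/p)^{-m}(1-\nu_p(\uple{f})/p)\bigr)^k$ sidesteps all of this: convergence of $\prod_p\gamma_p$ is nothing but the definition of $\sing{f}$ as a limit of partial products, and absolute convergence of~(\ref{eq-renorm}) reduces to the single resultant computation you give, your $\mathfrak{R}(\uple{h})$ playing the role of the paper's $D(\uple{h})$ without the algebraic-number-theory correction factor. The paper's normalization has the mild conceptual point that its $\gamma_p$ depends only on the fields $K_j$; yours has the practical advantage of making $\expect_2(X_{p,\uple{f}})=0$ at every prime immediate, which is exactly Lemma~\ref{lm-end}, so that Proposition~\ref{pr-identity} follows at once rather than through a separate local calculation. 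One small remark: your passage from $k\nu_p(\uple{f})-\expect_2(\rho_{p,\uple{f}})\ll 1/p$ to $\expect_2(|X_{p,\uple{f}}|)\ll p^{-2}$ tacitly uses $\rho_{p,\uple{f}}\leq k\nu_p(\uple{f})$, so that the absolute value is harmless.
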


\begin{proof}
  (1) To define $\gamma_p(\uple{f})$, let $\theta_j$, $1\leq j\leq m$,
  be a complex root of the irreducible polynomial $f_j$, and let
  $K_j=\Qq(\theta_j)$ be the extension of $\Qq$ of degree $\deg(f_j)$
  generated by $\theta_j$. Then put
$$
\gamma_p(\uple{f})=\prod_{1\leq i\leq m}{
\Bigl(1-\frac{1}{p}\Bigr)^{k(r_{j}(p)-1)}}
$$
where $r_{j}(n)$, for $n\geq 1$, is the number of prime ideals of norm
$n$ in the ring of integers of $K_j$. In view of this definition, to
check first that the product of $\gamma_p(\uple{f})$ converges, we can
do so for each $f_j$ separately. Then the statement follows, after
taking the logarithm of a partial product over $p\leq X$, from the
well-known asymptotic formula
$$
\sum_{p\leq X}{\frac{r_{j}(p)}{p}}=\sum_{p\leq X}{\frac{1}{p}}+c(K_j)
+O((\log X)^{-1})
$$
for $X\geq 2$, where $c(K_j)$ is a constant depending only on $K_j$,
and the implied constant also depends only on $K_j$.
\par
It therefore remains to prove that the product~(\ref{eq-renorm}) is
absolutely convergent for any $k$-tuple of integers $\uple{h}$ with
$\compos{f}{h}$ primitive. To do so, we claim that there exists an
integer $D(\uple{h})\geq 1$ (which may also depend on $\uple{f}$) such
that, for $p\nmid D(\uple{h})$, we have
\begin{equation}\label{eq-distinct-zeros}
\rho_{p,\uple{f}}(\uple{h})
=k\sum_{j=1}^m{\nu_p(f_j)}=k\sum_{j=1}^m{r_{j}(p)}.
\end{equation}
\par
The desired convergence then follows from that of
$$
\prod_{p\nmid D(\uple{h})}{\gamma_p(\uple{f})^{-1}
\Bigl(1-\frac{1}{p}\Bigr)^{-km}
\Bigl(1-\frac{\rho_{p,\uple{f}}(\uple{h})}{p}\Bigr)}
=\prod_{p\nmid D(\uple{h})}{
\Bigl(
1-\frac{1}{p}
\Bigr)^{-\rho_{p,\uple{f}}(\uple{h})}
\Bigl(1-\frac{\rho_{p,\uple{f}}(\uple{h}}{p}\Bigr)
},
$$
and the latter is clear since the $p$-factor can be written
$1+O(p^{-2})$, where the implied constant depends only on $k$ and
$\uple{f}$.
\par
The existence of $D(\uple{h})$ is easy; first, let
$$
D_1(\uple{h})=\Bigl| \prod_{(i,j)\not=(i',j')}{
  \res(f_j(X+h_i),f_{j'}(X+h_{i'})) }\Bigr| ,
$$
where $\res(\cdot,\cdot)$ is the resultant of two polynomials. By
compatibility of the resultant with reduction modulo $p$, we have
$p\mid D_1(\uple{h})$ if and only if, for some $(i,j)\not=(i',j')$,
there exists a common zero $x\in\Zz/p\Zz$ of $f_{j}(X+h_i)$ and
$f_{j'}(X+h_{i'})$. By contraposition, we first obtain
$$
\rho_{p,\uple{f}}(\uple{h})=
k\nu_{p}(\uple{f})=
k\sum_{j=1}^m{\nu_p(f_j)},
$$
for $p\nmid D_1(\uple{h})$ (the sets of zeros modulo $p$ of the
components of $\compos{f}{h}$ are then distinct, and obviously there
are as many, namely the sum $\nu_p(\uple{f})$ of the $\nu_p(f_j)$, for
each of the $k$ shifts $h_i$).
\par
Next, it is a standard fact of algebraic number theory that for each
$j$, there exists an integer $\Delta_j\geq 1$ such that
$\nu_p(f_j)=r_{j}(p)$ for $p\nmid \Delta_j$. Thus we can take
$$
D(\uple{h})=D_1(\uple{h})\prod_{1\leq j\leq m}{\Delta_j}
$$
to obtain the second equality in~(\ref{eq-distinct-zeros}).
\par
Note that $D(\uple{h})$ is non-zero (hence $\geq 1$) because
otherwise, there would exist a common zero $\theta\in\Cc$ of
$f_{j}(X+h_i)$ and $f_{j'}(X+h_{i'})$, and because those are
irreducible integral primitive\footnote{\ In the sense that the gcd of
  their coefficients is $1$.} polynomials with positive leading
coefficient, this is only possible if
$$
f_{j}(X+h_i)=f_{j'}(X+h_{i'}),
$$
which is excluded by the assumption that $\compos{f}{h}$ be primitive.
\par
Note in passing the estimate
$$
D(\uple{h})\ll (2|\uple{h}|_k)^{2k^2m\sum \deg(f_j)}
$$
for all $\uple{h}$, where the implied constant depends only on
$\uple{f}$; this follows straightforwardly from the determinant
expression of the resultant in $D_1(\uple{h})$ (see, e.g.,~\cite[\S
V.10]{lang}).
\par
(2) With the bounds we have proved on $X_{p,\uple{f}}(\uple{h})$
(leading to an analogue of Lemma~\ref{lm-apnu}), and the estimate on
$D(\uple{h})$ (analogue of~(\ref{eq-discrim})), together with
Lemma~\ref{lm-paucity-imprim} to ensure that the equidistribution of
$k$-tuples modulo squarefree integers $q$ remains valid (compare
with~(\ref{eq-mid-equidist})), we can pretty much follow the steps of
the proof of Theorem~\ref{pr-gallagher-moments}. We also
use~(\ref{eq-cons-imprim}) to go from the limit of the expectation on
$\Omega_1$ to summing over $k$-tuples normalized by $1/h^k$ and taking
$h\ra +\infty$. The details are left to the reader.
\end{proof}

\begin{proof}[Proof of Lemma~\ref{lm-apply}]
We have first
\begin{align*}
\frac{1}{h^k}{
\sum_{|\uple{h}|\leq h}
\singn{\compos{f}{h}}
}
&=
\frac{1}{h^k}{
\sum_{|\uple{h}|\leq h}
\Bigl(\prod_p{\gamma_p(\uple{f})}\Bigr)
\prod_{p}{(1+X_{p,\uple{f}}(\uple{h}))}
}
\\&\ra 
\Bigl(\prod_p{\gamma_p(\uple{f})}\Bigr)
 \prod_{p}{(1+\expect_2(
X_{p,\uple{f}}))}\quad\text{ as } h\ra +\infty,
\end{align*}
by the above, and then we can simply write this limit as
\begin{align*}
  \Bigl(\prod_p{\gamma_p(\uple{f})}\Bigr) \prod_{p}{(1+\expect_2(
    X_{p,\uple{f}}))} &=\prod_{p}{ \expect_2(
    \gamma_p(\uple{f})(1+X_{p,\uple{f}}) )
  }\\
  &=\prod_p{\expect_2\Bigl( \Bigl(1-\frac{1}{p}\Bigr)^{-km}
    \Bigl(1-\frac{\rho_{p,\uple{f}}(\uple{h})}{p}\Bigr) \Bigr)}.
\end{align*}
\end{proof}

We conclude with the last remaining part of the proof, namely
Lemma~\ref{lm-imprim}.  The following proof can almost certainly be
improved, but although the statement becomes fairly clear after
checking one or two examples, the author has not found a cleaner way
to deal with the apparent possibilities of combinatorial
complications. The point is that as $\compos{f}{h}$ becomes ``less
primitive'' (i.e., there are less distinct elements among the $km$
polynomials involved), the number of prime seeds $\leq N$ should
increase (by a power of $(\log N)$), but also the number of $k$-tuples
with this property diminishes (by a power of $h\leq \lambda (\log
N)^m$), and this gain has to compensate for the loss.

\begin{proof}[Proof of Lemma~\ref{lm-imprim}]
We first quote a standard sieve upper-bound for an individual
primitive family $\uple{f}$ (with $m$ elements), which is uniform, and
which allows us to prove the lemma unconditionally: for $N\geq 2$, for
any $k$-tuple $\uple{h}$ with distinct elements for which
$\compos{f}{h}$ contains $\ell$ distinct components, we have
\begin{equation}\label{eq-sieve}
\pi(N;\compos{f}{h})\ll (\log\log 3|\uple{h}|)^{km}
\frac{N}{(\log N)^{\ell}},
\end{equation}
where the implied constant depends only on $k$ and
$\uple{f}$. Precisely,~(\ref{eq-sieve}) for $k$-tuples follows
immediately from, e.g, Th. 2.3 in~\cite{halberstam-richert}, and it is
easy to adapt this to the case at hand since uniformity is only asked
with respect to $\uple{h}$.
Note also that, since the application we give is conditional on much
stronger statements like~(\ref{eq-bh-unif}), we could also apply the
latter for this purpose.
\par
Now, as in the proof of Lemma~\ref{lm-paucity-imprim}, we denote by
$I$ the set of $k$-tuples $\uple{h}$ with distinct components such
that $\compos{f}{h}$ is not primitive. Recall $\mathcal{R}$ is the set
of pairs $(j_1,j_2)$ for which
$$
f_{j_1}(X)=f_{j_2}(X+\delta(j_1,j_2))
$$
for some (unique) integer $\delta(j_1,j_2)\not=0$.
\par
We continue as follows: for an $\uple{h}\in I$, let
$\Gamma_{\uple{h}}$ be the graph with vertex set $\{1,\ldots, k\}$ and
with (unoriented) edges $(i_1,i_2)$ corresponding to those indices for
which the relation
\begin{equation}\label{eq-relation}
h_{i_1}-h_{i_2}=\delta(j_1,j_2)
\end{equation}
holds for some $(j_1,j_2)\in\mathcal{R}$; the proof of
Lemma~\ref{lm-paucity-imprim} shows that there is at least one
edge. Because the number of possibilities for $\Gamma_{\uple{h}}$ is
clearly bounded in terms of $k$ only, and we allow a constant
depending on $k$ in our estimate, we may continue by fixing one
possible graph $\Gamma$ and assuming that \emph{all} $\uple{h}\in I$
satisfy $\Gamma_{\uple{h}}=\Gamma$.
\par
This being done, we first estimate from above the number of $k$-tuples
which lie in $I$ (under the above assumption that the graph is
fixed!). We claim that
\begin{equation}\label{eq-upper}
|\{\uple{h}\in I\,\mid\, |\uple{h}|\leq h\}|\leq h^{c}
\end{equation}
where $c=|\pi_0(\Gamma)|$ is the number of connected components of
$\Gamma$. 
\par
To see this, notice that each connected component $C$ corresponds to a
set of variables which are \emph{independent} of all others, so that
$I$ is the product over the connected components of sets $I_C$ of
$|C|$-tuples satisfying the relations~(\ref{eq-relation}) dictated by
$C$. Now we have
$$
|\{\uple{h}\in I_C\,\mid\, |\uple{h}|\leq h\}|\leq h,
$$
because $C$ is connected: if we fix some vertex $i_0$ of $C$, then for
any choice of $h_{i_0}$, the value of $h_i$ is determined by means of
the relations~(\ref{eq-relation}) for all vertices $i$ of $C$, using
induction on the length of a path from $i_0$ to $i$ (which exists by
connectedness).
\par
Taking the product over $C$ of these individual upper bounds, we
obtain the desired estimate~(\ref{eq-upper}).
\par
We next need to estimate from below the number of distinct elements in
the family $\compos{f}{h}$ for a fixed $\uple{h}\in I$ (still under
the assumption that the graph $\Gamma_{\uple{h}}=\Gamma$ is
fixed). 
\par
Let again $C$ be a connected component of the graph $\Gamma$. We
consider the set (say $\{\compos{f}{h}\}_C$) of polynomials of the
form $f_j(X+h_i)$, where $1\leq j\leq m$ and $i$ is a vertex of
$C$. We claim this set contains at least $m+1$ distinct polynomials if
$C$ has at least $2$ vertices, and $m$ if $C$ is a singleton. Indeed,
fixing a vertex $i_0$ of $C$, the set contains the polynomials
$f_j(X+h_{i_0})$, which are distinct since $\uple{f}$ is a primitive
family. This already takes care of the case where $C$ is a singleton,
so assume now that $C$ contains at least another vertex $i$. If all
the $m$ distinct polynomials $f_j(X+h_i)$ were already in the set
$\{f_j(X+h_{i_0})\}$, this would define a permutation $\sigma$ of
$\{1,\ldots, m\}$ such that
$$
f_j(X+h_i)=f_{\sigma(j)}(X+h_{i_0}),\quad 1\leq j\leq m.
$$
\par
Consider a cycle $(j_1,\ldots, j_{\ell})$ of length $\ell$ in the
decomposition of $\sigma$; applying the identity to $j_1$,
$\sigma(j_1)=j_2$, etc, in turn, we derive the identity
$$
f_{j_1}(X)=f_{\sigma^{\ell}(j_1)}(X+(\ell-1)(h_{i_0}-h_i))=
f_{j_1}(X+(\ell-1)(h_{i_0}-h_i)).
$$
\par
Since $f_j$ is non-constant and $h_{i_0}\not= h_i$, we deduce that
$\ell=1$; this holding for all cycles in $\sigma$ would mean that
$\sigma$ is the identity, but then $f_1(X+h_i)=f_1(X+h_{i_0})$ again
contradicts the fact that $\uple{h}$ has distinct components. This
means that $\sigma$ can not exist, and so the set $\{f_j(X+h_{i})\}$
contains at least one polynomial not among the first $m$ ones, which
was our objective.
\par
Next observe that, by the very definition of the graph $\Gamma$, the
sets $\{\compos{f}{h}\}_C$ are disjoint when $C$ runs over the
connected components of $\Gamma$, and hence we find that any
$\compos{f}{h}$ contains at least $cm+d$ elements, where $d$ is the
number of connected components of $\Gamma$ which are not
singletons. Note that $d\geq 1$, because $\Gamma$ has at least one
edge. 
\par
We finally estimate the contribution of $k$-tuples in $I$
using~(\ref{eq-sieve}) and~(\ref{eq-upper}): we obtain
$$
\frac{1}{N}\sum_{\stacksum{\uple{h}\in I}{|\uple{h}|\leq h}}{
\pi(N;\compos{f}{h})
}
\ll h^{c}(\log 2h)^{km}(\log N)^{-cm-d}
$$
where the implied constant depends on $k$ and $\uple{f}$. If $h\leq
\lambda (\log N)^m$, as assumed in Lemma~\ref{lm-imprim}, we obtain
$$
\frac{1}{N}\sum_{\stacksum{\uple{h}\in I}{|\uple{h}|\leq h}}{
\pi(N;\compos{f}{h})
}
\ll (\log N)^{-d+\eps}
$$
for any $\eps>0$, where the implied constant depends on $k$,
$\lambda$, $\uple{f}$ and $\eps$. Since $d\geq 1$, the lemma is
finally proved.
\end{proof}

\begin{remark}
  The gain of $(\log N)^{-1}$ is indeed the best possible in
  general. Consider for example the primitive family
  $\uple{f}=(f_1,f_2,f_3)=(X^2+7,(X+2)^2+7,(X+4)^2+7)$ for which it is
  easy to check that $\sing{f}\not=0$ ($7$ is not a square modulo $3$
  or $5$, and each $f_j(0)$ is odd). We have relations
  $f_1(X+2)=f_2(X)$, $f_2(X+2)=f_3(X)$.
\par
Consider $k=2$. If we look at $2$-tuples $\uple{h}=(h_1,h_2)$ for
which $h_2=h_1+2$, we obtain
\begin{gather*}
  \compos{f}{h}=(f_1(X+h_1),f_2(X+h_1),f_3(X+h_1),\\
 \hspace{3cm} f_1(X+h_2),f_2(X+h_2),f_3(X+h_2))\\
  =
  (f_1(X+h_1),f_1(X+h_1+2),f_1(X+h_1+4),\\
  \hspace{3cm}f_1(X+h_2),f_1(X+h_2+2),f_1(X+h_2+4))\\
  =
  (f_1(X+h_2-2),f_1(X+h_2),f_1(X+h_2+2),\\
  \hspace{3cm}f_1(X+h_2),f_1(X+h_2+2),f_1(X+h_2+4)),
\end{gather*}
which contains $4$ distinct polynomials. With $h\asymp \lambda(\log
N)^3$, those $2$-tuples with $|\uple{h}|\leq h$ contribute about
$N(\log N)^{3-4}$ to the sum of Lemma~\ref{lm-imprim}
(under~(\ref{eq-bh-unif}), of course).
\end{remark}

Finally, here are a few examples.

\begin{example}\label{ex-poisson}
  (1) If we take $\uple{f}_1=(X,X+2)$, we obtain that the number of
  twin primes $(p, p+2)$ with $n<p\leq n+\lambda (\log n)^2$ should be
  approximately distributed like a Poisson random variable with mean
$$
2\lambda \prod_{p\geq 3}{\Bigl(1-\frac{1}{(p-1)^2}\Bigr)}\approx
1.320336593\ldots \lambda.
$$
\par
Similarly, if we take $\uple{f}_2=(X,2X+1)$, we find that the number
of Germain primes (i.e., primes $p$ with $2p+1$ also prime) with
$n<p\leq n+\lambda(\log n)^2$ should be approximately distributed like
a Poisson random variable with mean
$$
\lambda \singn{\uple{f}_2}=2\lambda \prod_{p\geq
  3}{\Bigl(1-\frac{1}{(p-1)^2}\Bigr)}.
$$
\par
Two further remarks are interesting here. First, the proof of
Theorem~\ref{pr-poisson} shows that whenever $\uple{f}$ consists
of linear polynomials (in particuler for those two results), ``only''
the (uniform) Hardy-Littlewood conjecture is needed. In other words,
no assumption is required beyond those of Gallagher's original result
for the primes themselves.
\par
Secondly, if one is interested in the case of twin primes in
particular, Lemma~\ref{lm-imprim} has a trivial proof from the
following coincidence: if $\uple{f}=(X,X+2)$, $\uple{h}$ has distinct
entries, and $\compos{f}{h}$ is not primitive, then
$$
\singn{\compos{f}{h}}=0,\quad\quad
\pi(N;\compos{f}{h})\leq 1.
$$
\par
Indeed, if $\compos{f}{h}$ is not primitive, we have $k\geq 2$ and an
equality $h_{i_2}=h_{i_1}+2$ for some $i_1$, $i_2$. The family
$\compos{f}{h}$ contains in particular the three polynomials
$X+h_{i_1}$, $X+h_{i_2}=X+h_{i_1}+2$ and
$X+h_{i_2}+2=X+h_{i_1}+4$. Hence, to be a prime seed for
$\compos{f}{h}$, an integer $n\geq 1$ must be such that, in
particular, the triple $(n+h_{i_1},n+h_{i_1}+2,n+h_{i_1}+4)$ consists
of prime numbers. But those three numbers are distinct modulo $3$,
showing that $\nu_3(\compos{f}{h})=3$, and the only possible case is
$(n,n+2,n+4)=(3,5,7)$. (Examples such as $\uple{f}=(X^2+7,(X+2)^2+7)$
and $\uple{h}=(3,1)$ show that this special situation where
imprimitive $k$-tuples lead to vanishing singular series for
$\compos{f}{h}$ is indeed a coincidence).
\par
(2) If we take $\uple{f}_3=(X^2+1)$, and renormalize in an obvious way,
we find that the number of primes of the form $p=n^2+1$ in an interval
of the form $N^2<n\leq (N+\lambda (\log N))^2$ should be
approximately distributed like a Poisson random variable with mean
$$
\lambda \singn{\uple{f}_3}=\frac{4 \lambda }{\pi}
\prod_{p\equiv 1\mods{4}}{\Bigl(1-\frac{1}{(p-1)^2}\Bigr)}
\prod_{p\equiv 3\mods{4}}{\Bigl(1-\frac{1}{p^2-1}\Bigr)}.
$$
\end{example}

\end{document}